\newcommand{\R}{\mathbb{R}}
\newcommand{\N}{\mathbb{N}}
\newcommand{\Z}{\mathbb{Z}}
\newcommand{\s}{\mathbb{S}}
\newcommand{\scal}[2]{\left\langle #1,#2 \right\rangle}
\DeclareMathOperator{\diag}{diag}
\DeclareMathOperator{\hess}{Hess}
\DeclareMathOperator{\defeg}{\overset{def}{=}}
\DeclareMathOperator{\tr}{Tr}
\DeclareMathOperator{\un}{\mathbf{1}}
\DeclareMathOperator{\ddiag}{ddiag}
\newcommand{\Rn}{\mathbb{R}^n}
\newcommand{\Rp}{\mathbb{R}^p}
\newcommand{\Rnn}{\mathbb{R}^{n \times n}}
\newcommand{\Rnp}{\mathbb{R}^{n\times p}}
\newcommand{\norm}[1]{\left\lVert#1\right\rVert}
\newtheorem{thm}{Theorem}[section]
\newtheorem{lem}[thm]{Lemma}
\newtheorem{assumption*}[thm]{Assumption}
\newtheorem{defi}[thm]{Definition}
\newtheorem*{lem*}{Lemma}
\newtheorem{prop}[thm]{Proposition}
\newtheorem*{res*}{Result}
\newtheorem{cor}[thm]{Corollary}
\newtheorem*{thm*}{Theorem}
\newtheorem*{prop*}{Proposition}
\newtheorem*{cor*}{Corollary}
\newif\ifcom
\title{Benign landscape for Burer-Monteiro factorizations of MaxCut-type semidefinite programs}
\author{Faniriana Rakoto Endor\thanks{\texttt{rakotoendor@ceremade.dauphine.fr}}, Irène Waldspurger\thanks{\texttt{waldspurger@ceremade.dauphine.fr}}\\ \\
CNRS (UMR 7534), Université Paris Dauphine, Inria Mokaplan}
\date{}
\begin{document}

\maketitle

\begin{abstract}
  We consider MaxCut-type semidefinite programs (SDP) which admit a low rank solution. To numerically leverage the low rank hypothesis, a standard algorithmic approach is the Burer-Monteiro factorization, which allows to significantly reduce the dimensionality of the problem at the cost of its convexity. We give a sharp condition on the conditioning of the Laplacian matrix associated with the SDP under which any second-order critical point of the non-convex problem is a global minimizer. By applying our theorem, we improve on recent results about the correctness of the Burer-Monteiro approach on $\mathbb{Z}_2$-synchronization problems and the Kuramoto model.
\end{abstract}

\section{Introduction}
\subsection{Presentation of the problem}
\begin{sloppypar}
Semidefinite programs (SDP) are optimization tools that allow the solving and modeling of a variety of problems across applied sciences. A number of problems admits a SDP formulation in combinatorial optimization \citep*{goemans1995improved}, machine learning and data sciences \citep{lanckriet2004learning}, statistics and signal processing \citep*{candes_strohmer_voroninski}. In this paper, we are interested in so-called \textit{MaxCut-type} SDPs:
\begin{equation}\label{opt:MC} \tag{SDP}
\begin{aligned}
\underset{X\in\s^{n \times n}}{\mbox{min}} \quad & -\scal{C}{X}\\
\textrm{s.t.} \quad & X \succeq 0\\
  &\mathrm{diag}(X)=\un_n,
\end{aligned}
\end{equation}
where the operator $\diag:\R^{n \times n} \to \R^n$ extracts the diagonal of a square matrix, $\un_n=(1\dots 1)^T \in \Rn$ and the symmetric matrix $C \in \Rnn$ is called the \textit{cost matrix}. SDP of this form are especially known to provide precise convex relaxations of MaxCut problems from graph optimization \citep*{goemans1995improved}. They can also model problems such as $\Z_2$-synchronization \citep*{abbe2014decoding}, phase retrieval~\citep*{waldspurger2015phase} and the Kuramoto model \citep*{kuramoto1,ling_xu_bandeira}, for particular choices of the cost matrix $C$. Several general methods exist to numerically solve problem~\eqref{opt:MC}, but they scale poorly with $n$. For instance, interior-point solvers require $O(n^3)$ computations per iteration and $O(n^2)$ to store the variable \citep*{benson2000solving}. Other methods may have a lower per iteration complexity, but this comes at the cost of less accuracy. For instance, in the first-order method \citep*{o2016conic},  each iteration costs at worse $O(n^2)$, but $O(1/\varepsilon)$ iterations are required to reach $\varepsilon$-accuracy.

To reduce the computational complexity of solvers, one must exploit the specific properties of the problem at hand, if any. For instance, it may be known in advance that the solution to~\eqref{opt:MC} is low-rank: \citep*{pataki} guarantees that there exists a solution with rank bounded by $\sqrt{2n}+O(1)$ and, when~\eqref{opt:MC} is the relaxation of a combinatorial problem, the optimal rank is often much less (see for instance \citep*{candes_strohmer_voroninski} for a theoretical justification in a particular case, \citep*{journee_bach_absil_sepulchre} for a numerical investigation).
  In this case, it is possible to tackle the problem using its so-called \textit{Burer-Monteiro factorization} \citep*{burer2003nonlinear}. The principle is to factor the variable as $X = VV^T$, for $V\in\R^{n\times p}$, where $p\in\N$ is larger or equal to the rank of the sought solution, and much smaller than $n$. Then, one optimizes over $V$, instead of directly over $X$:
\begin{equation}\label{opt:MCF}\tag{Burer-Monteiro}
\begin{aligned}
\underset{V \in \R^{n \times p}}{\mbox{min}} \quad & -\scal{C}{VV^T}\\
\textrm{s.t.} \quad & \diag(VV^T) = \un_n.\\
\end{aligned}
\end{equation}
This factorized problem can be tackled with a number of standard algorithms. For instance, \citep*{burer2003nonlinear} uses an augmented Lagrangian approach, while \citep*{journee_bach_absil_sepulchre} proposes a manifold based second-order method.

In the factorized problem, the number of variables is reduced to $np$ instead of $O(n^2)$ in the initial problem, which is computationally advantageous when $p\ll n$. However, the convexity is lost, so standard solvers are not guaranteed to reach the solution. Still, in practice, they oftentimes converge to a global solution $V \in \Rnp$ of the factorized problem, for which $X=VV^T$ solves the initial problem.

In this article, we focus on the simplest case, where Problem~\eqref{opt:MC} has a rank $1$ solution $X = \bm{xx}^T$, for $\bm{x} \in \{ \pm 1\}^n$, and try to understand under which conditions one can certify that standard solvers converge to a global solution. The rank $1$ hypothesis is satisfied in applications like $\mathbb{Z}_2$-synchronization and the Kuramoto model.

\subsection{Prior work and our contribution}

The main explanation proposed in the literature for the success of standard algorithms at solving~\eqref{opt:MCF} has been the \textit{benign non-convexity} of the optimization landscape: it may be that all second-order critical points of~\eqref{opt:MCF} are global minimizers. Since standard algorithms typically find a second-order critical point \citep*{lee_panageas_piliouras_simchowitz_jordan_recht}, they consequently find a global minimizer.

Literature suggests that the greater $p$ is, the more likely it is that the landscape is benign. More precisely, when $p \ge \sqrt{2n} + O(1)$, the landscape of the factorized problem is benign for almost all cost matrices $C$ \citep*{boumal2020deterministic}. This property is even true for all cost matrices if $p> \frac{n}{2}$ \citep*[Cor. 5.11]{boumal2020deterministic}, while it can fail for a zero Lebesgue measure subset of cost matrices if $\sqrt{2n} + O(1) \leq p \leq \frac{n}{2}$ \citep*{Liam}. However, when $p \le \sqrt{2n} + O(1)$, there is a subset of cost matrices $C$ of positive Lebesgue measure for which~\eqref{opt:MCF} admits non-optimal critical points \citep*{WW20} (with a gap to the optimal value scaling in $O(1/p)$ \citep*{Mei2017SolvingSF}, but strictly positive).

Nonetheless, in practice, standard algorithms seem to find a solution of~\eqref{opt:MCF} below the threshold $\sqrt{2n}$, suggesting that, maybe, the set of cost matrices with a non-optimal critical point is small, and ``typical'' cost matrices do not belong to it. Therefore, researchers have tried to find properties on $C$ guaranteeing that $C$ is not in this bad set, focusing for the moment on the setting where the minimizer of~\eqref{opt:MC} has rank $1$.
The articles \citep*{mcrae2024benign} and \citep*{mcrae2024nonconvex} discuss matrices $C$ with a specific structure, motivated by synchronization problems. They prove that the landscape of~\eqref{opt:MCF} is benign under conditions which involve eigenvalues of the subcomponents of $C$. \citep*{ling2023local} considers general matrices $C$ and shows that the landscape is benign if the condition number of the associated \textit{Laplacian matrix} is smaller than $\frac{p-1}{2}$.
For important instances of~\eqref{opt:MC} (mainly $\mathbb{Z}_2$-synchronization with additive Gaussian noise and multiplicative Bernoulli noise), these recent results show that standard algorithms, applied to~\eqref{opt:MCF}, retrieve the rank $1$ solution under close to optimal conditions.
\end{sloppypar}

\paragraph{Main result}
Our main result is a sufficient condition on the condition number of the Laplacian matrix of~\eqref{opt:MC} which ensures that the landscape of~\eqref{opt:MCF} is benign. This tightens the result of \citep*{ling2023local}: we show that if the condition number is less than $p$ (instead of $\frac{p-1}{2}$ in \citep*{ling2023local}), then the landscape of~\eqref{opt:MCF} is benign. Importantly, we show that this bound is optimal. Finally, by applying our theorem to $\mathbb{Z}_2$-synchronization and the Kuramoto oscillator system, we also improve on the applications of \citep*{mcrae2024nonconvex} and \citep*{ling2023local}.

Just as we were finishing the journal version of this article, the preprint \citep*{mcrae_normalized} was uploaded on arXiv. It extends our main theorem by accounting for a diagonal preconditioner. This allows to derive improved versions of our applications, matching the correctness bounds for convex semidefinite relaxations (without Burer-Monteiro factorization).

\subsection{Structure of the paper}
In section~\ref{section2}, we present our main result and, in section~\ref{section3}, its application to $\mathbb{Z}_2$-synchronization with additive Gaussian, then Bernoulli noise, and finally to the synchronization of the Kuramoto model. In section~\ref{section4} we provide the proof of the main theorem, which consists in a novel reformulation of our main result as finding an appropriate dual certificate for a specific convex minimization problem. We also make a comparison between our strategy of proof and that of \citep*{ling2023local} by reinterpreting the author's proof as finding a dual certificate. Most technical details will be left in the appendix \ref{Appendix}.
\subsection{Notation}
Throughout this paper, $\s^{n \times n}$ is the set of symmetric $n \times n$ matrices. We write $X \succeq 0$ if $X$ is a positive semidefinite matrix. For a matrix $X \in \Rnn$, when it makes sense, $\lambda_1(X) \le \lambda_2(X) \le \dots \le \lambda_n(X)$ are the eigenvalues of $X$ in ascending order. For matrices $X,Y \in \R^{n \times m}$, $\scal{X}{Y}=\tr(X^TY)$ is the standard inner product on $\R^{n \times m}$, $X \odot Y$ is the entry-wise or Hadamard product, $\norm{X}_F=\sqrt{\scal{X}{X}}$ is the Frobenius norm on $\R^{n \times m}$, $X_{i:}\in\R^m$ is the $i$-th row of $X$ and $X_{:j}\in\Rn$ is the $j$-th column of $X$. For $X \in \R^{n \times m}$, $\norm{X}$ is the spectral or $\ell_2$ operator norm of $X$ and $\norm{X}_\infty$ is the $\ell_\infty$-norm of $X$ i.e. the maximum entry in absolute value. The operator $\ddiag:\Rnn\to\s^{n\times n}$ zeroes out all the non diagonal entries of a matrix and for any vector $x\in\Rn$, $\diag(x)\in\s^{n\times n}$ is the diagonal matrix with the coordinates of $x$ on the diagonal. For any $x,y \in \R$ the notation $x \lesssim y$ means that there exists a constant $C > 0$ that does not depend on any parameter, such that $x \le C y$. For any vector $x\in\Rn$, $\norm{x}$ is the Euclidean norm of $x$, $\un_n=(1\dots 1)^T\in\Rn$.
\section{Main result}\label{section2}

Problem~\eqref{opt:MCF} can be seen as minimizing a function over the product of spheres
\begin{align*}
  \{V \in \Rnp,\diag(VV^T)=\un_n\} 
  & =\{V\in\Rnp,\norm{V_{1:}}=\dots=\norm{V_{n:}}=1\} \\
  & = (\s^{p-1})^n.
\end{align*}
The set $(\s^{p-1})^n$ can be endowed with the Riemannian structure inherited from that of $\Rnp$. It is then a Riemannian manifold.
\begin{defi}
  Let $\mathcal{M}$ be a Riemannian manifold and $f:\mathcal{M}\to\R$ a twice-differentiable function.
  For any $x\in\mathcal{M}$, we say that
  \begin{itemize}
  \item $x$ is a first-order critical point if $\nabla f(x)=0$, where $\nabla f(x)$ is the Riemannian gradient of $f$ at $x$ (which belongs to the tangent space $T_x\mathcal{M}$);
  \item $x$ is a second-order critical point (SOCP) if $\nabla f(x)=0$ and $\operatorname{Hess} f(x)\succeq 0$, where $\operatorname{Hess}f(x)$ is the Riemannian Hessian of $f$ at $x$ (which is a bilinear map on $T_x\mathcal{M}$).
  \end{itemize}
\end{defi}
More detailed explanations of these concepts can be found in \citep*{absil2008optimization} or \citep*{boumal2023introduction}.

Let us recall that we will study~\eqref{opt:MC} under the assumption that it has a rank one solution $X=\bm{xx}^T$. To set up the statement of the theorem, let us consider $\bm{x}\in\{\pm 1\}^n$, and define the Laplacian matrix as
\begin{equation}\label{defL}
  \bm{L} = \ddiag(C\bm{x}\bm{x}^T) - C.
\end{equation}
Note that, by construction, $\bm{Lx}=0$. Standard duality theory shows that $\bm{xx}^T$ is a (rank $1$) solution to~\eqref{opt:MC} if $\bm{L}\succeq 0$; this solution is unique if, in addition, $\lambda_2(\bm{L})>0$.

Our theorem gives a sufficient condition on the condition number $\frac{\lambda_n(\bm{L})}{\lambda_2(\bm{L})}$ of the Laplacian matrix under which all SOCP of~\eqref{opt:MCF} are optimal.

\begin{thm}\label{thm1}
  Fix a cost matrix $C \in \s^{n \times n}$ and a binary vector $\bm{x}\in\{\pm 1\}^n$. Assume that $\bm{L} \succeq 0$ and $\lambda_2(\bm{L})>0$. If
  \begin{equation*}
    p > \frac{\lambda_n(\bm{L})}{\lambda_2(\bm{L})},
  \end{equation*}
  then any second-order critical point $V$ of~\eqref{opt:MCF} is a global minimizer, i.e. $VV^T = \bm{x}\bm{x}^T$.
\end{thm}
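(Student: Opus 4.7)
The plan is to prove that at any second-order critical point $V$ of the factorized problem, the matrix
$$
S(V) \;:=\; \ddiag(CVV^T) - C \;=\; \bm L - \diag(\lambda), \qquad \lambda_i := \bigl(C(\bm{xx}^T - VV^T)\bigr)_{ii},
$$
is positive semi-definite. Once this is established, the conclusion follows via a dual-certificate argument for the convex program
$$
\min_{Y \in \s^{n\times n}} \langle \bm L, Y\rangle \qquad \text{s.t.}\qquad Y \succeq 0,\; \diag(Y) = \un_n,
$$
whose unique optimum is $\bm{xx}^T$ (value $0$), since $\bm L \succeq 0$ and $\ker \bm L = \R \bm x$. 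Indeed, the first-order condition $S(V) V = 0$ rewrites as $\bm L V = \diag(\lambda) V$, which supplies complementary slackness at $Y = VV^T$ with dual variable $\nu = \lambda$; dual feasibility is precisely $S(V) \succeq 0$, so strong duality then forces $VV^T = \bm{xx}^T$.

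The task therefore reduces to extracting $S(V) \succeq 0$ from the SOCP Hessian condition $\langle S(V),\dot V \dot V^T\rangle \geq 0$, valid for every tangent $\dot V$ (i.e.\ $(\dot V V^T)_{ii}=0$ for all $i$). A first natural move is to test the condition against the explicit tangent family
$$
\dot V_{i:}(w, a) \;=\; w_i\,(I_p - V_{i:} V_{i:}^T)\, a, \qquad w \in \Rn,\; a \in \R^p,
$$
and average over $a$ uniform on $\s^{p-1}$. A routine simplification, using $\operatorname{tr}(P_i P_j) = p-2+(VV^T)_{ij}^2$ for $P_i := I_p - V_{i:} V_{i:}^T$, then yields the weighted Hadamard inequality
$$
(p-2)\,w^T S(V)\,w + \langle S(V),\; ww^T \odot (VV^T)^{\odot 2}\rangle \;\geq\; 0 \quad \forall\, w \in \Rn,
$$
or equivalently $S(V) \odot \bigl[(p-2)\un_n \un_n^T + (VV^T)^{\odot 2}\bigr] \succeq 0$.

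The central difficulty is extracting the unweighted conclusion $S(V) \succeq 0$ from this Hadamard-weighted inequality with the sharp constant $p > \lambda_n(\bm L)/\lambda_2(\bm L)$: a naïve bookkeeping yields only the weaker Ling-type threshold $p > 1 + 2\lambda_n/\lambda_2$. The novelty must lie in a tighter construction of the dual certificate which uses three extra ingredients beyond the Hadamard inequality: the first-order relation $\bm L V = \diag(\lambda) V$, the spectral sandwich $\lambda_2(\bm L)(I - \bm{xx}^T/n) \preceq \bm L \preceq \lambda_n(\bm L)(I - \bm{xx}^T/n)$ on $\bm x^\perp$, and the identity $\sum_i \lambda_i = \langle \bm L, VV^T\rangle \geq 0$ that ties the scalar we want to annihilate to the dual variable itself. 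I expect these to combine into a chain of inequalities producing a tight bound of the form $\bigl(p\,\lambda_2(\bm L) - \lambda_n(\bm L)\bigr)\,\langle \bm L, VV^T\rangle \leq 0$; under $p > \lambda_n/\lambda_2$, this forces $\langle \bm L, VV^T\rangle = 0$ and hence $VV^T = \bm{xx}^T$. The main obstacle will be the careful choice of test vectors (or, dually, the explicit form of the certificate) that attains the sharp constant; the matching lower bound announced later in the paper serves as a useful sanity check confirming that the certificate must be essentially unique at the critical value $p = \lambda_n/\lambda_2$.
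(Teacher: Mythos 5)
Your setup is sound and matches the paper's framework in spirit: the matrix you call $S(V)$ is exactly $\bm{L}-\ddiag(\bm{L}VV^T)$, the averaged tangent family $\dot V_{i:}=w_i(I_p-V_{i:}V_{i:}^T)a$ does yield $S(V)\odot\bigl[(p-2)\un_n\un_n^T+(VV^T)^{\odot 2}\bigr]\succeq 0$ (this Hadamard average is precisely the element of the cone $K$ that the paper uses as its certificate component $H$), and your observation that this alone only reaches a Ling-type threshold is correct. But the proof stops exactly at the point where the theorem actually lives. The final inequality you ``expect,'' namely $(p\,\lambda_2(\bm{L})-\lambda_n(\bm{L}))\scal{\bm{L}}{VV^T}\le 0$, is never derived, and the three ingredients you list do not combine into it by any routine chain: the identity $\sum_i\lambda_i=\scal{\bm{L}}{VV^T}\ge 0$ is not the missing piece, and testing the first-order condition against $V$ itself (which is what that identity amounts to) provably cannot improve the constant beyond $(p-1)/2$-type bounds.

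The missing idea is a genuinely new test direction for the \emph{first-order} condition, not a better use of the Hessian. Concretely, after rotating $V$ so that $V^T\un_n$ is supported on the first column $v_1$, the paper pairs $\bm{L}-\ddiag(\bm{L}VV^T)$ with $W_*=\beta(p-1)V+\delta\bigl(\un_ne_1^T-\diag(v_1)V\bigr)$, i.e.\ it adds to the ``obvious'' multiple of $V$ a correction proportional to $P_{T_V}(\un_n\un_n^TV)$, and then optimizes jointly over $(\beta,\delta)$ subject to a $2\times 2$ positive-semidefiniteness constraint coupling them. The sharp constant $p$ emerges only because, after this optimization, the required inequality collapses to the nonnegativity of the perfect square $\bigl(\norm{v_1}^2-\tfrac1p(n+\tfrac{p-1}{n}\scal{v_1}{\un_n}^2)\bigr)^2$. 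Without identifying this extra direction and carrying out that two-parameter balance, your argument establishes only what Ling already proved; the core of the theorem remains open in your proposal. (A minor structural remark: ``first prove $S(V)\succeq 0$, then conclude by SDP duality'' is circular as a plan, since $S(V)\succeq 0$ together with $S(V)V=0$ is equivalent to the conclusion; the paper instead proves the contrapositive, bounding the condition number from below when a non-optimal SOCP exists.)
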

In particular, if the condition number of the Laplacian matrix is upper bounded by $p$, then standard optimization algorithms converge to a global minimum of the factorized problem. This result is purely deterministic and holds for a variety of cost matrices $C$ without assumption on their structure. It improves on \citep*[Theorem 2.1]{ling2023local}, which reads as follows.
\begin{thm*}[\citep*{ling2023local}]
    Under the same assumptions as in theorem~\ref{thm1}, assume that
    \begin{equation*}
        p \ge \frac{2\lambda_n(\bm{L})}{\lambda_2(\bm{L})}+1.
    \end{equation*}
    Then all second-order critical points of~\eqref{opt:MCF} are optimal.
\end{thm*}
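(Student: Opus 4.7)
The plan is to argue by contradiction: assume $V$ is an SOCP of~\eqref{opt:MCF} with $Y := VV^T \neq \bm{xx}^T$, and derive an upper bound on $p-1$ that contradicts the hypothesis. Introduce the Lagrangian-like matrix $S(V) := \ddiag(CVV^T) - C$. A standard Riemannian calculation on the product manifold $(\s^{p-1})^n$ yields $\nabla f(V) = 2 S(V) V$ and, for any tangent $\dot V$ (rows satisfying $V_{i:}\dot V_{i:}^T = 0$), $\hess f(V)[\dot V, \dot V] = 2\langle S(V), \dot V \dot V^T\rangle$. Being a second-order critical point is therefore equivalent to $S(V)V = 0$ together with $\langle S(V), \dot V \dot V^T\rangle \geq 0$ for every such tangent $\dot V$.

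To extract spectral information from this family of inequalities, I would probe the Hessian with a one-parameter family of tangent directions. For $u \in \R^p$, define $\dot V^{(u)}$ row-by-row via $\dot V^{(u)}_{i:} := \bm{x}_i\, u^T (I_p - V_{i:}^T V_{i:})$; each row is automatically orthogonal to $V_{i:}$, so $\dot V^{(u)}$ is tangent. Summing the SOCP inequality over an orthonormal basis $u_1, \dots, u_p$ of $\R^p$ turns the free parameter $u$ into a trace through the identity $\tr((I_p - V_{i:}^T V_{i:})(I_p - V_{j:}^T V_{j:})) = p - 2 + Y_{ij}^2$, producing
\[
(p-2)\,\bm{x}^T S(V) \bm{x} + \langle \tilde S, Y\odot Y\rangle \;\geq\; 0,
\]
where $\tilde S := \diag(\bm{x}) S(V) \diag(\bm{x})$. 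Two short identities then rephrase this purely in terms of $\bm{L}$: since $\bm{x}_i^2 = 1$, one checks $\bm{x}^T S(V) \bm{x} = -\langle \bm{L}, Y\rangle$, and $\tilde S = \tilde{\bm L} + D$ where $\tilde{\bm L} := \diag(\bm x) \bm L \diag(\bm x)$ and $D$ is a diagonal matrix of trace $-\langle \bm L, Y\rangle$ (so $\langle D, Y\odot Y\rangle = \tr D$ because $Y_{ii} = 1$). Substituting collapses the SOCP inequality to
\[
\langle \tilde{\bm L}, Y\odot Y\rangle \;\geq\; (p-1)\,\langle \bm L, Y\rangle.
\]

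To close the argument, set $\alpha := \bm{x}^T Y \bm{x}/n \in [0, n]$. Since $\bm L \succeq 0$ has kernel $\mathrm{span}(\bm x)$ and second eigenvalue $\lambda_2(\bm L)$, the Rayleigh inequality gives $\langle \bm L, Y\rangle \geq \lambda_2(\bm L)(n - \alpha)$. On the other side, $\tilde{\bm L}$ is congruent to $\bm L$ through the orthogonal matrix $\diag(\bm x)$, so shares its spectrum but has kernel $\mathrm{span}(\un_n)$; using $\tr(Y\odot Y) = n$, $\un_n^T(Y\odot Y)\un_n = \norm{Y}_F^2$, and Cauchy-Schwarz $\norm{Y}_F^2 \geq \langle Y, \bm{xx}^T\rangle^2/\norm{\bm{xx}^T}_F^2 = \alpha^2$, Rayleigh yields $\langle \tilde{\bm L}, Y\odot Y\rangle \leq \lambda_n(\bm L)(n - \alpha^2/n)$. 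The master inequality then factors as $(n-\alpha)(n+\alpha)\lambda_n(\bm L)/n \geq (p-1)(n-\alpha)\lambda_2(\bm L)$. If $\alpha < n$, dividing by $n-\alpha>0$ gives
\[
p - 1 \;\leq\; \frac{\lambda_n(\bm L)}{\lambda_2(\bm L)} \cdot \frac{n + \alpha}{n} \;<\; \frac{2\,\lambda_n(\bm L)}{\lambda_2(\bm L)},
\]
contradicting the hypothesis; if $\alpha = n$, then $\bm{x}^T Y \bm{x} = n^2$ combined with $\tr Y = n$ and $Y \succeq 0$ forces $Y = \bm{xx}^T$ outright.

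The main hurdle is engineering the test direction $\dot V^{(u)}$: it must be automatically tangent, average into a spectrally tractable matrix, and incorporate the sign factor $\bm{x}_i$ so that $S(V)$ is replaced by $\tilde S$, whose dominant part $\tilde{\bm L}$ shares the spectrum of $\bm L$ but possesses the easily handled kernel $\mathrm{span}(\un_n)$. Once this alignment is in place, the remainder is Rayleigh-quotient bookkeeping, and the factor $2$ in the final bound traces back directly to the slack estimate $(n+\alpha)/n \leq 2$ — precisely the slack that the sharper analysis of Theorem~\ref{thm1} will later have to avoid losing.
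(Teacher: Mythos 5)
Your proof is correct and is essentially the paper's own presentation of Ling's argument (subsection~\ref{ss:remarks}): your averaged test directions $\sum_k \dot V^{(u_k)}\dot V^{(u_k)T}$ produce exactly the element $H=(p-2)\un_n\un_n^T+(VV^T)^{\odot 2}$ of $K$ used there, your ``master inequality'' $\langle\tilde{\bm L},Y\odot Y\rangle\ge(p-1)\langle\bm L,Y\rangle$ is the chain \eqref{ineqrmk1}--\eqref{inegcond} specialized to that certificate, and your bound $\norm{Y}_F^2\ge\alpha^2$ plus $(n+\alpha)/n\le 2$ is Ling's Lemma~3.2. The only cosmetic difference is that you bypass the explicit $W$/first-order-condition step and the dual formulation, working directly with the Hessian inequality.
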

Our results are similar in nature but the proofs are quite different.
Moreover, our bound is optimal in the sense of the following property, the proof of which can be found in the appendix~\ref{proof_sec_2}.
\begin{prop}\label{propopti}
    Let $p\ge2$ and $n \ge 6p$. If $n$ or $p$ is even, there exist $C\in\s^{n \times n},\bm{x}\in\{\pm 1\}^n$ satisfying the assumptions of theorem~\ref{thm1} such that $\frac{\lambda_n(\bm{L})}{\lambda_2(\bm{L})}=p$ and problem~\eqref{opt:MCF} admits a non optimal second-order critical point.
\end{prop}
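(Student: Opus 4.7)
The plan is to construct an explicit counterexample. I would first use the symmetry $V \mapsto \diag(\bm{x})V$, $C \mapsto \diag(\bm{x})\, C\, \diag(\bm{x})$, which conjugates $\bm{L}$ and preserves the Burer--Monteiro problem together with its second-order critical points, to reduce to the case $\bm{x} = \un_n$.

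The heart of the argument is a core construction at $n_0 = 2p$. I would take $V \in \R^{2p \times p}$ with rows the signed standard basis $e_1, -e_1, \ldots, e_p, -e_p$ of $\R^p$, and
\[
C = I_p \otimes \begin{pmatrix} cp & c(2-p) \\ c(2-p) & cp \end{pmatrix} + c(J_p - I_p) \otimes J_2, \qquad c > 0.
\]
A Kronecker-eigenvalue computation then yields that $\bm{L}$ has spectrum consisting of $0$, $2cp$ with multiplicity $p-1$, and $2c$ with multiplicity $p$, so $\lambda_n(\bm{L})/\lambda_2(\bm{L}) = p$; and that $V$ is first-order critical with uniform Lagrange multiplier $\mu = 2c(p-1)$. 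Non-optimality is immediate since $VV^T \ne \un_n\un_n^T$.

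The main obstacle is the second-order condition. The dual matrix $S = \mu I - C$ has a negative eigenvalue $-2c$ along $\un_n$, so it is not PSD, but I would show that it is PSD on the tangent space at $V$. A tangent $\dot V$ satisfies $\dot V_{i, k(i)} = 0$, where $k(i)$ is the coordinate associated with row $i$. Decomposing each column $\dot V_{:k}$ in the explicit $S$-eigenbasis and exploiting this vanishing, the per-column Hessian contribution reduces to
\[
c\bigl[(p-1)\|a^k\|^2 - P_k^2\bigr],
\]
where $a^k \in \R^p$ satisfies $a^k_k = 0$ and $P_k = \sum_j a^k_j$. Since $P_k$ is the sum of only the $p-1$ free entries of $a^k$, Cauchy--Schwarz gives $P_k^2 \le (p-1)\|a^k\|^2$, so each contribution is non-negative and $V$ is a SOCP. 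The match between the factor $p-1$ produced by Cauchy--Schwarz and the threshold $p$ of Theorem~\ref{thm1} is the source of tightness.

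To realize arbitrary $n \ge 6p$, I would replicate the base by choosing balanced per-direction sizes $r_{(j,+)} = r_{(j,-)} = r_j$ with $\sum_j 2 r_j = n$ (possible whenever $n \ge 6p$ is even) and taking $C$ block-constant from the $2p \times 2p$ pattern. In the uniform case $r_j = r$, a Kronecker computation gives $\bm{L}' = \bm{L}_0 \otimes J_r + 2cpr\, I_{n_0} \otimes P_r$ where $P_r = I_r - \frac{1}{r}J_r$, whose nonzero spectrum is $\{2cpr, 2cr\}$ with ratio $p$; the SOCP property is inherited because the tangent Hessian splits into a within-replicate part with positive coefficient $2cr(p-1)$ and a between-replicate part that reduces, after averaging within replicates, to the Hessian of the base problem. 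The case $n$ odd (which forces $p$ even by hypothesis) is then handled by a small asymmetric modification of this base, using the extra freedom available when $p$ is even.
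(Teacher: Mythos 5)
Your core construction at $n_0=2p$ is correct, and it is in fact an instance of the construction the paper itself uses: with $V$ having rows $\pm e_k$, one checks $V^T\un_{n_0}=0$, $V^TV=\tfrac{n_0}{p}I_p$ and $\scal{v_i\odot v_j\odot v_k}{\un_{n_0}}=0$ for all $i,j,k$, and your $\bm{L}$ is (up to the scale $c$) exactly $P_V+pP_{V^{\perp}}-pP_{\un}$, which is the paper's Laplacian. Your Cauchy--Schwarz argument for positive semidefiniteness of the Hessian on the tangent space is sound, and so is the uniform replication to $n=2pr$: the identity $\bm{L}'=\bm{L}_0\otimes J_r+2cpr\,I_{2p}\otimes P_r$ and the splitting of the Hessian into a within-replicate part and the averaged base Hessian both check out.

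The gap is in covering \emph{all} $n\ge 6p$, which is what the proposition demands. Uniform replication only reaches multiples of $2p$. For other even $n$ you are forced into unequal sizes $r_j$, and then the condition number is no longer $p$: on group-constant vectors $\bm{L}'$ acts (in an orthonormal basis) as $cnI_{2p}-D_r^{1/2}C_0D_r^{1/2}$ with $D_r$ the diagonal matrix of group sizes, and the columns of the base $V$ remain eigenvectors but now with distinct eigenvalues $2cr_j$. Already for $p=2$ the nonzero spectrum of $\bm{L}'$ is $\{2c\min(r_1,r_2),\,2c\max(r_1,r_2),\,cn\}$ with $n=2(r_1+r_2)$, so $\lambda_n(\bm{L}')/\lambda_2(\bm{L}')=(r_1+r_2)/\min(r_1,r_2)>2=p$ whenever $r_1\ne r_2$; the requirement $\lambda_n(\bm{L})/\lambda_2(\bm{L})=p$ fails, and the second-order analysis for unbalanced groups is not carried out either. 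The odd-$n$, even-$p$ case is not constructed at all: ``a small asymmetric modification'' is not a proof, and it is unclear what modification would simultaneously restore the exact condition number and the Hessian inequality. The paper sidesteps all of this by building, for every $n\ge 6p$, a single $V\in(\s^{p-1})^n$ satisfying the three structural identities above directly --- trigonometric columns $V_{i,2j-1}\propto\cos(2\pi m_j(i-1)/n)$, $V_{i,2j}\propto\sin(2\pi m_j(i-1)/n)$ at well-separated frequencies $m_j=3j-2$, plus an alternating-sign column when $p$ is odd and $n$ even --- and then setting $\bm{L}=P_V+pP_{V^{\perp}}-pP_{\un}$; positive semidefiniteness of the Hessian is proved by exhibiting an explicit subspace of its kernel (matrices $\diag(Va)V-\un_na^T$) and arguing on its orthogonal complement. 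To complete your proof you would need either such a direct construction for arbitrary $n$, or a replication scheme that provably preserves $\lambda_n/\lambda_2=p$.
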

\section{Applications}\label{section3}
\subsection{$\mathbb{Z}_2$-synchronization with additive Gaussian noise}
Here, we consider the $\mathbb{Z}_2$-synchronization problem with additive Gaussian noise which consists in reconstructing a binary vector $\bm{x}$ with coordinates $x_1,\dots,x_n \in \{\pm1\}$ from  noisy measurements $x_ix_j + \sigma W_{ij}$ where $ W_{ij} =  W_{ji} \sim \mathcal{N}(0,1)$, $W_{ii}=0$ and $\sigma > 0$. This problem admits a relaxation of the form~\eqref{opt:MC} with cost matrix
\begin{equation}\label{costZ2synch1}
  C = \bm{xx^T} + \sigma W.
\end{equation}

\citep*{bandeira2018random} shows that this SDP relaxation retrieves the rank $1$ matrix $\bm{xx^T}$ when $\sigma < \sqrt{\frac{n}{(2+\varepsilon)\log n}}$ (for any $\varepsilon>0$), and explains that, for larger values of $\sigma$, no algorithm is expected to succeed.
Using our theorem~\ref{thm1}, we can show that the more tractable Burer-Monteiro factorization reaches the same threshold up to a multiplicative factor which goes to $1$ when $p$ becomes large.
\begin{cor}\label{cor1}
    We consider the $\mathbb{Z}_2$-synchronization problem with Gaussian noise, where the cost matrix is defined by~\eqref{costZ2synch1}. For any $\varepsilon > 0$ and large enough $n$, if
    \begin{equation}\label{ineqZ2synch1}
        \sigma < \frac{p-1}{p+1}\sqrt{\frac{n}{(2+\varepsilon)\log n}},
    \end{equation}
    then all second-order critical points of~\eqref{opt:MCF} are optimal with probability at least $1-n^{-\varepsilon/4}-4e^{-n}$.
\end{cor}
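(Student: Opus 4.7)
The plan is to apply Theorem~\ref{thm1} pointwise on a high-probability event on which the Laplacian has a small condition number. Concretely, I need to verify that, with probability at least $1-n^{-\varepsilon/4}-4e^{-n}$, the Laplacian $\bm{L}$ satisfies $\bm{L}\succeq 0$, $\lambda_2(\bm{L})>0$ and $\lambda_n(\bm{L})/\lambda_2(\bm{L})<p$.

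The first step is to write $\bm{L}$ as a perturbation of its noiseless counterpart. Using $C=\bm{x}\bm{x}^T+\sigma W$ and $\bm{x}^T\bm{x}=n$, a direct computation gives $\bm{L}=\bm{L}_0+\sigma\Delta$, where $\bm{L}_0=n\,\mathrm{Id}_n-\bm{x}\bm{x}^T$ is the noiseless Laplacian and $\Delta=\diag(\bm{x}\odot W\bm{x})-W$. Both matrices annihilate $\bm{x}$: for $\Delta$ this follows coordinate-wise from $(\diag(\bm{x}\odot W\bm{x})\bm{x})_i=x_i^2(W\bm{x})_i=(W\bm{x})_i$. Hence $\bm{L}$ preserves the orthogonal decomposition $\R^n=\mathrm{span}(\bm{x})\oplus\bm{x}^\perp$, and on $\bm{x}^\perp$ the matrix $\bm{L}_0$ acts as $n$ times the identity. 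Weyl's inequality then gives
\begin{equation*}
n-\sigma\|\Delta\|\le\lambda_2(\bm{L})\le\lambda_n(\bm{L})\le n+\sigma\|\Delta\|,
\end{equation*}
so the three conditions required by Theorem~\ref{thm1} reduce to the single deterministic inequality $\sigma\|\Delta\|<n(p-1)/(p+1)$.

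The second step is a probabilistic control of $\|\Delta\|$ via the triangle inequality $\|\Delta\|\le\|W\|+\|\bm{x}\odot W\bm{x}\|_\infty$ (the second norm being the operator norm of a diagonal matrix). For the Wigner matrix $W$, a standard non-asymptotic tail bound yields $\|W\|\lesssim\sqrt{n}$ with failure probability at most $4e^{-n}$. For the diagonal contribution, each entry $(W\bm{x})_i=\sum_{j\ne i}W_{ij}x_j$ is $\mathcal{N}(0,n-1)$, so a union bound over $i\in\{1,\dots,n\}$ combined with the Gaussian tail gives $\|\bm{x}\odot W\bm{x}\|_\infty\le\sqrt{(2+\varepsilon/2)n\log n}$ with failure probability at most $n^{-\varepsilon/4}$, up to a slight adjustment of constants. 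On the intersection of these two events, and for $n$ large enough, the spectral-norm contribution is negligible and one obtains $\|\Delta\|\le\sqrt{(2+\varepsilon)n\log n}$. Substituting into $\sigma\|\Delta\|<n(p-1)/(p+1)$ yields exactly the threshold~\eqref{ineqZ2synch1}.

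The only real technical care is the absorption of two lower-order slacks, namely the $O(\sqrt{n})$ term coming from $\|W\|$ and the $\varepsilon/2$ gap used to turn the Gaussian union bound into a clean $n^{-\varepsilon/4}$ failure probability. Both vanish in the large-$n$ limit, which is why the conclusion is phrased for $n$ large enough. Beyond this bookkeeping, the conceptual content of the proof is concentrated in the perturbative decomposition $\bm{L}=\bm{L}_0+\sigma\Delta$; once it is written down, the remainder is standard random matrix concentration with no deeper obstacle.
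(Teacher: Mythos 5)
Your proposal is correct and follows essentially the same route as the paper: the same decomposition $\bm{L}=n(I_n-n^{-1}\bm{xx}^T)+\sigma(\ddiag(W\bm{xx}^T)-W)$, Weyl's inequality to reduce everything to bounding $\sigma\|\Delta\|$ against $n\frac{p-1}{p+1}$, the same triangle-inequality split into $\|W\|\lesssim\sqrt n$ and a Gaussian union bound on $\|W\bm{x}\|_\infty$, and the same absorption of the lower-order terms for large $n$ before invoking Theorem~\ref{thm1}. No gaps.
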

This corollary is proved in~\ref{proof_sec_3}. It improves on \citep*[corollary 2.4]{ling2023local}, which reads as follows.
\begin{cor*}[\citep*{ling2023local}]
    Under the same conditions as corollary~\ref{cor1}, if
    \begin{equation*}
        \sigma < \frac{p-3}{4(p+1)}\sqrt{\frac{n}{\log n}},
    \end{equation*}
    then all SOCP of the factorized problem~\eqref{opt:MCF} are optimal with high probability.
\end{cor*}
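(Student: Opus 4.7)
The plan is to apply the theorem of \citep*{ling2023local} reproduced just above. It suffices to establish, with high probability over $W$, the condition-number bound $(p-1)\lambda_2(\bm{L}) \ge 2\lambda_n(\bm{L})$, together with $\bm{L} \succeq 0$ and $\lambda_2(\bm{L}) > 0$. The task therefore reduces to a lower bound on $\lambda_2(\bm{L})$ and a matching upper bound on $\lambda_n(\bm{L})$ under the Gaussian noise model.

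The algebraic step is direct. Since $C = \bm{xx}^T + \sigma W$ and $x_i^2 = 1$, one computes
\begin{equation*}
  \bm{L} = nI - \bm{xx}^T + \sigma M, \qquad M := \diag(W\bm{x}\odot\bm{x}) - W,
\end{equation*}
and observes that $M\bm{x} = 0$ because $\diag(W\bm{x}\odot\bm{x})\bm{x} = W\bm{x}$. Consequently $\bm{L}$ preserves the orthogonal decomposition $\R^n = \mathrm{span}(\bm{x}) \oplus \bm{x}^\perp$, vanishing on the first summand and acting as $nI + \sigma M$ on the second. Weyl's inequality applied to this restriction then yields
\begin{equation*}
  n - \sigma\|M\| \le \lambda_2(\bm{L}) \le \lambda_n(\bm{L}) \le n + \sigma\|M\|,
\end{equation*}
so the desired inequality $(p-1)\lambda_2(\bm{L}) \ge 2\lambda_n(\bm{L})$ is implied by the clean deterministic condition $(p+1)\sigma\|M\| \le (p-3)n$.

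For the probabilistic step I would bound $\|M\| \le \|W\| + \|\diag(W\bm{x}\odot\bm{x})\| = \|W\| + \|W\bm{x}\|_\infty$. Classical Wigner-type concentration gives $\|W\| \lesssim \sqrt{n}$ with probability at least $1 - e^{-n}$, while $(W\bm{x})_i \sim \mathcal{N}(0, n-1)$ combined with a union bound on standard Gaussian tails yields $\|W\bm{x}\|_\infty \le 2\sqrt{n\log n}$ with high probability. Absorbing the subdominant $\|W\|$ into the $\|W\bm{x}\|_\infty$ term gives $\|M\| \le 4\sqrt{n\log n}$ for $n$ large enough. Plugging this into the inequality from the previous paragraph produces exactly $\sigma < \frac{p-3}{4(p+1)}\sqrt{n/\log n}$, and the same estimate forces $\sigma\|M\| < n$, so that $\bm{L}\succeq 0$ and $\lambda_2(\bm{L}) > 0$ are automatic and \citep*{ling2023local}'s theorem applies.

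There is no significant obstacle here: the identities on $\bm{L}$ are elementary and both concentration inequalities are classical. The only quantitative subtlety is the choice of constants --- the prefactor $4$ reflects the loose bound $\|M\| \le 4\sqrt{n\log n}$, which generously absorbs the sharp asymptotic constants ($2$ for $\|W\|$ and $\sqrt{2}$ for $\|W\bm{x}\|_\infty$) without optimization. Sharpening these constants, and invoking our stronger theorem~\ref{thm1} in place of \citep*{ling2023local}'s, is precisely how our own corollary~\ref{cor1} achieves the improved prefactor $\tfrac{p-1}{p+1}\cdot\tfrac{1}{\sqrt{2+\varepsilon}}$.
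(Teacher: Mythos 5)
Your proposal is correct and follows essentially the same route as the paper: this quoted corollary of \citep{ling2023local} is not re-proved in the paper, but its proof of the analogous Corollary~\ref{cor1} uses exactly your decomposition $\bm{L}=n(I_n-n^{-1}\bm{xx}^T)+\sigma(\ddiag(W\bm{xx}^T)-W)$, Weyl's inequality, and the two concentration bounds $\norm{W\bm{x}}_\infty\lesssim\sqrt{n\log n}$ and $\norm{W}\lesssim\sqrt{n}$. The only difference is bookkeeping: you feed the resulting condition-number estimate into Ling's threshold $\frac{\lambda_n(\bm{L})}{\lambda_2(\bm{L})}\le\frac{p-1}{2}$ with the generous constant $4$ (which indeed absorbs $\sqrt{2}+o(1)$), whereas the paper feeds it into Theorem~\ref{thm1} with the sharp constant $\sqrt{2+\varepsilon}$.
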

Indeed, our result holds for $p$ as small as $2$ whereas theirs needs $p \ge 4$. In the large $p$ limit, our bound is better by a constant multiplicative factor. We also improve on \citep*[Corollary 1]{mcrae2024nonconvex}.%, whose result reads as follows.
\begin{cor*}[\citep*{mcrae2024nonconvex}]
    For $n \ge 2$, $\varepsilon > 0$, if the noise level of cost matrix~\eqref{costZ2synch1} satisfies
    \begin{equation*}
        \sigma \le \frac{p-3}{p-1}\sqrt{\frac{n}{(2+\varepsilon)\log n}},
    \end{equation*}
    then all second-order critical points of~\eqref{opt:MCF} are optimal with probability $\to 1$ as $n\to\infty$.
  \end{cor*}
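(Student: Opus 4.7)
My strategy is to reduce the corollary to a spectral-norm bound on a Wigner-type matrix and then apply Theorem~\ref{thm1}. Substituting $C=\bm{xx}^T+\sigma W$ into~\eqref{defL} and using $\bm{x}^T\bm{x}=n$ together with $x_i^2=1$, a straightforward expansion gives
\[
\bm{L} = (nI - \bm{xx}^T) + \sigma(D - W), \qquad D := \diag\bigl(\bm{x}\odot(W\bm{x})\bigr).
\]
Since $\bm{L}\bm{x}=0$ and $nI-\bm{xx}^T$ acts as $nI$ on $\bm{x}^\perp$, every nonzero eigenvalue of $\bm{L}$ lies in the interval $[n-\sigma\|D-W\|,\, n+\sigma\|D-W\|]$. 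Hence
\[
\frac{\lambda_n(\bm{L})}{\lambda_2(\bm{L})} \leq \frac{n+\sigma\|D-W\|}{n-\sigma\|D-W\|},
\]
and this ratio is strictly less than $p$, while simultaneously guaranteeing $\lambda_2(\bm{L})>0$ and $\bm{L}\succeq 0$, as soon as $\sigma\|D-W\|<\tfrac{p-1}{p+1}\,n$.

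It therefore suffices to bound $\|D-W\|$ in probability. The diagonal matrix $D$ has entries $x_i(W\bm{x})_i$, each a centered Gaussian with variance $n-1$ (using $W_{ii}=0$ and $x_j^2=1$); these entries are correlated across $i$, but a union bound over their marginals together with the standard Gaussian tail yields $\|D\|\leq \sqrt{(2+\varepsilon/2)\,n\log n}$ with probability at least $1-n^{-\varepsilon/4}$, for $n$ sufficiently large. A standard spectral-norm concentration inequality for symmetric Gaussian matrices gives $\|W\|\leq 3\sqrt{n}$ with probability at least $1-4e^{-n}$. By the triangle inequality, $\|D-W\|\leq\sqrt{(2+\varepsilon)\,n\log n}$ for $n$ large enough, on an event of probability at least $1-n^{-\varepsilon/4}-4e^{-n}$.

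On this high-probability event, the assumption $\sigma<\tfrac{p-1}{p+1}\sqrt{n/((2+\varepsilon)\log n)}$ forces $\sigma\|D-W\|<\tfrac{p-1}{p+1}\,n$, so the hypotheses of Theorem~\ref{thm1} are met and every second-order critical point $V$ of~\eqref{opt:MCF} satisfies $VV^T=\bm{xx}^T$. The main technical point lies in the concentration step: one has to calibrate the $\varepsilon/2$ slack inside the $\|D\|$ bound so that the subleading $\sqrt{n}$ contribution from $\|W\|$ is absorbed into the target constant $(2+\varepsilon)$, which works precisely because $\sqrt{n\log n}\gg \sqrt{n}$ in the large-$n$ regime. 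Everything else is a routine expansion of the Laplacian and an eigenvalue-localization argument.
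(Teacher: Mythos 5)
Your argument is correct and essentially reproduces the paper's proof of its own (stronger) Corollary~\ref{cor1}: you decompose $\bm{L}=(nI-\bm{xx}^T)+\sigma\bigl(\ddiag(W\bm{xx}^T)-W\bigr)$, localize the nonzero eigenvalues by Weyl's inequality (equivalently, by restricting to $\bm{x}^\perp$, which is legal since your $D-W$ annihilates $\bm{x}$), and bound the noise term by $\|W\bm{x}\|_\infty+\|W\|\le\sqrt{(2+\varepsilon)n\log n}$ with high probability before invoking Theorem~\ref{thm1}; this matches the paper verbatim, which writes $\bm{L}^W$ for your $D-W$. The only thing to add is the one-line bridge from the stated hypothesis to the one you actually use in the final step: since $\frac{p-3}{p-1}\le\frac{p-1}{p+1}$ (cross-multiplying gives $(p-3)(p+1)\le(p-1)^2$, i.e.\ $-3\le1$), the assumption $\sigma\le\frac{p-3}{p-1}\sqrt{n/((2+\varepsilon)\log n)}$ implies $\sigma<\frac{p-1}{p+1}\sqrt{n/((2+\varepsilon)\log n)}$, which is the threshold your derivation hits; your proof therefore establishes the strictly stronger Corollary~\ref{cor1}, of which the cited statement is an immediate consequence.
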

  
The improvement lies in the fact that our result does no prohibit us from taking $p$ as small as $2$. The proof is built upon tools used both in \citep*{ling2023local} and \citep*{mcrae2024nonconvex}.
\subsection{$\mathbb{Z}_2$-synchronization with Bernoulli noise}
The problem of $\mathbb{Z}_2$-synchronization with Bernoulli noise consists in recovering a binary vector $\bm{x} \in \{\pm 1\}^n$ from its pairwise observations $x_ix_j$, where the sign of $x_ix_j$ is flipped with probability $\frac{1-\delta}{2}$, for some $0<\delta\le1$. In other words, when $\delta$ is close to $1$, the signs are not flipped and when $\delta$ is close to $0$, the observations are often corrupted. This leads to a problem of the form~\eqref{opt:MC}, with
\begin{equation}\label{Cber}
    C_{ij} = \left\{
    \begin{array}{ll}
        x_ix_j & \mbox{with probability } \frac{1+\delta}{2} \mbox{ if } i \neq j, \\
        -x_ix_j & \mbox{with probability } \frac{1-\delta}{2}\mbox{ if } i \neq j, \\
        0 & \mbox{ if } i = j.
    \end{array}
\right.
\end{equation}
Our result gives a condition on $\delta$ under which the landscape of the factorized problem~\eqref{opt:MCF} is benign.
\begin{cor}\label{Z2ber}
    We consider the $\mathbb{Z}_2$-synchronization problem with Bernoulli noise of parameter $0< \delta\le 1$, where the cost matrix is defined by~\eqref{Cber}. For any $\varepsilon>0$ and large enough $n$, if
    \begin{equation}\label{ineqZ2ber}
      \delta > \frac{p+1}{p-1} \sqrt{\frac{(2+\varepsilon) \log n}{n}},
    \end{equation}
    then all second-order critical points of~\eqref{opt:MCF} are optimal with probability at least $1-n^{-3}-n^{-\frac{\varepsilon}{3}}$.
\end{cor}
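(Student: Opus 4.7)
The strategy is to apply Theorem~\ref{thm1}, so I would verify that, under hypothesis~\eqref{ineqZ2ber}, the Laplacian $\bm{L}$ associated with $C$ and $\bm{x}$ satisfies $\bm{L}\succeq 0$, $\lambda_2(\bm{L})>0$, and $\lambda_n(\bm{L})/\lambda_2(\bm{L})<p$ with probability at least $1-n^{-3}-n^{-\varepsilon/3}$. Since $\mathbb{E}[C_{ij}]=\delta x_ix_j$ for $i\neq j$ while $C_{ii}=0$, I first decompose $C=\delta(\bm{xx}^T-I)+N$, where $N$ is a symmetric zero-diagonal noise matrix whose off-diagonal entries are independent, zero-mean, bounded in absolute value by $1+\delta\le 2$, and of variance $1-\delta^2\le 1$. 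Plugging this into definition~\eqref{defL} and using $\bm{xx}^T\bm{xx}^T=n\bm{xx}^T$ yields
\begin{equation*}
  \bm{L} = \delta n\, I - \delta\,\bm{xx}^T + E,
  \qquad E := \ddiag(N\bm{xx}^T)-N,
\end{equation*}
and a direct computation using $|x_i|=1$ confirms $E\bm{x}=0$, so that $\bm{L}$ restricted to $\bm{x}^\perp$ acts as $\delta n\cdot I + E|_{\bm{x}^\perp}$.

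By Weyl's inequality, this gives $\lambda_2(\bm{L})\ge \delta n-\|E\|$ and $\lambda_n(\bm{L})\le \delta n+\|E\|$, so the three required conditions all follow from $\|E\|<\tfrac{p-1}{p+1}\delta n$. Rearranging~\eqref{ineqZ2ber}, it is therefore enough to prove
\begin{equation*}
  \|E\| < \sqrt{(2+\varepsilon)n\log n}
\end{equation*}
with the announced probability. I would then split $\|E\|\le \|\ddiag(N\bm{xx}^T)\|+\|N\|=\|N\bm{x}\|_\infty+\|N\|$ (the first equality again using $|x_i|=1$) and bound the two pieces separately.

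Each coordinate $(N\bm{x})_i=\sum_{j\neq i}N_{ij}x_j$ is a sum of $n-1$ independent, bounded, zero-mean variables of total variance at most $n$, so Bernstein's inequality combined with a union bound yields $\|N\bm{x}\|_\infty\le \sqrt{(2+\varepsilon/2)n\log n}$ with probability at least $1-n^{-\varepsilon/3}$, for $n$ large. For the second piece, $N$ is a Wigner-type symmetric matrix with bounded independent entries, so standard non-asymptotic spectral bounds (e.g.\ matrix Bernstein or the Bandeira--van Handel inequality) give $\|N\|\le C\sqrt{n}$ with probability at least $1-n^{-3}$ for some absolute $C$. Since $\sqrt{n}$ is negligible compared with $\sqrt{n\log n}$, summing the two bounds delivers $\|E\|<\sqrt{(2+\varepsilon)n\log n}$ for $n$ large enough. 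The main technical obstacle is to extract the sharp leading constant $2+\varepsilon$ in the Bernstein step: this requires carefully handling the correction term $Bt/3$ in Bernstein's inequality and the variance factor $1-\delta^2$, but it is not a genuine difficulty because the hard regime is $\delta\to 0$, where the variance factor tends to $1$ and the correction term is of order $\sqrt{\log n/n}$, both negligible.
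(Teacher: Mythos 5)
Your proposal is correct and follows essentially the same route as the paper's proof: decompose $\bm{L}$ into its expectation $\delta(nI_n-\un_n\un_n^T)$ plus a zero-mean noise term, apply Weyl's inequality, and split the noise into a diagonal part (controlled by a scalar concentration inequality plus a union bound) and an off-diagonal Wigner-type part of norm $O(\sqrt n)$. Your direct rearrangement of \eqref{ineqZ2ber} into the target bound $\norm{E}<\sqrt{(2+\varepsilon)n\log n}$ is in fact slightly tidier than the paper's version, which keeps a $(1+\delta)$ variance factor in the Bennett bound and must then solve a quadratic inequality in $\delta$. One small bookkeeping point: with the allocation $\sqrt{(2+\varepsilon/2)\,n\log n}$ for $\norm{N\bm{x}}_\infty$, the union bound over $n$ coordinates only gives a failure probability of order $n^{-\varepsilon/4}$, which exceeds the claimed $n^{-\varepsilon/3}$; choosing the threshold closer to $\sqrt{(2+\varepsilon)\,n\log n}$ (e.g.\ with $0.9\,\varepsilon$ in place of $\varepsilon/2$) repairs this without affecting the rest of the argument.
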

The proof of this corollary is in~\ref{proof_sec_3}. This corollary is an improvement on \citep*[Theorem 2]{mcrae2024nonconvex} in the case where the observations $x_ix_j$ are complete. This theorem reads as follows.
\begin{thm*}[\citep*{mcrae2024nonconvex}]
    Consider the $\mathbb{Z}_2$-synchronization problem with Bernoulli noise for some $0<\delta\le1$. Assume that $p \ge 4$ and there exists some $\varepsilon>0$ such that
    \begin{equation*}
        \delta > \frac{p-1}{p-3} \sqrt{\frac{(2+\varepsilon) \log n}{n}}
    \end{equation*}
    Then, with probability $\to 1$ as $n\to \infty$, all second-order points of the factorized problem~\eqref{opt:MCF} with cost matrix as in~\eqref{Cber} are optimal.
\end{thm*}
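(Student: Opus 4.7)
The plan is to apply Theorem~\ref{thm1} to the Bernoulli cost matrix~\eqref{Cber}, which reduces to bounding the condition number of the associated Laplacian $\bm{L}$. First I compute $\bar{C} \defeg \mathbb{E}[C]$: since each off-diagonal entry has mean $\delta x_i x_j$ and the diagonal is zero, $\bar{C} = \delta(\bm{x}\bm{x}^T - I_n)$. A direct calculation then gives
\begin{equation*}
    \bar{\bm{L}} \defeg \ddiag(\bar{C}\bm{x}\bm{x}^T) - \bar{C} = \delta n \, I_n - \delta\, \bm{x}\bm{x}^T,
\end{equation*}
whose eigenvalues are $0$ with eigenvector $\bm{x}$ and $\delta n$ on $\bm{x}^\perp$. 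The random perturbation is $\bm{L} - \bar{\bm{L}} = \ddiag(\Delta\bm{x}\bm{x}^T) - \Delta$ with $\Delta \defeg C - \bar{C}$.

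By Weyl's inequality, $\lambda_2(\bm{L}) \ge \delta n - \norm{\bm{L} - \bar{\bm{L}}}$ and $\lambda_n(\bm{L}) \le \delta n + \norm{\bm{L} - \bar{\bm{L}}}$, so the sufficient condition $p > \lambda_n(\bm{L})/\lambda_2(\bm{L})$ of Theorem~\ref{thm1} is implied by
\begin{equation*}
    \norm{\bm{L} - \bar{\bm{L}}} \,<\, \frac{p-1}{p+1}\,\delta n,
\end{equation*}
and this inequality also guarantees $\bm{L} \succeq 0$ and $\lambda_2(\bm{L})>0$, so that the hypotheses of the theorem are met. Since $|x_i|=1$ one has $\norm{\ddiag(\Delta\bm{x}\bm{x}^T)} = \max_i |(\Delta\bm{x})_i|$, and the triangle inequality gives the splitting $\norm{\bm{L}-\bar{\bm{L}}} \le \max_i |(\Delta\bm{x})_i| + \norm{\Delta}$.

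Next I would bound each term by concentration. The off-diagonal entries of $\Delta$ are independent, bounded by $1+\delta \le 2$, with variance $1-\delta^2 \le 1$, so a standard symmetric random matrix bound (for instance matrix Bernstein, or the sharper result of Bandeira-van~Handel) yields $\norm{\Delta} \lesssim \sqrt{n}$ with probability at least $1 - n^{-3}$. For the row-wise term, each $(\Delta\bm{x})_i = \sum_{j\neq i} \Delta_{ij} x_j$ is a sum of $n-1$ independent centered bounded variables of variance at most $1-\delta^2$; a scalar Bernstein inequality followed by a union bound over $i \in \{1,\dots,n\}$ gives
\begin{equation*}
    \max_i |(\Delta\bm{x})_i| \le \sqrt{(2+\varepsilon) n \log n}
\end{equation*}
with probability at least $1 - n^{-\varepsilon/3}$. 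Since $\sqrt{n} = o(\sqrt{n\log n})$, the spectral-norm contribution can be absorbed into the prefactor, and the hypothesis $\delta > \frac{p+1}{p-1}\sqrt{(2+\varepsilon)\log n / n}$ is exactly what makes $\frac{p-1}{p+1}\delta n$ exceed $\norm{\bm{L}-\bar{\bm{L}}}$; Theorem~\ref{thm1} then concludes.

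The main delicate point will be preserving the sharp constant $\sqrt{2+\varepsilon}$ through the union bound. The $\sqrt{2}$ arises from the Gaussian-type tail $\exp(-t^2/(2v_i))$ with variance proxy $v_i \approx n(1-\delta^2)$, and keeping this factor tight requires using the exact variance $1-\delta^2$ rather than the crude bound~$1$, together with a careful verification that the Bernstein sub-exponential correction $bt/3$ is lower-order in the regime $\delta = \Theta(\sqrt{\log n / n})$. A small slack $\varepsilon' < \varepsilon$ may be introduced in the intermediate bound to accommodate the $\norm{\Delta} \lesssim \sqrt{n}$ correction and the $(1-\delta^2)$ factor, and the two probability losses $n^{-3}$ and $n^{-\varepsilon/3}$ add up to the failure probability stated in the corollary.
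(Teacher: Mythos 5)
Your proposal is correct and follows essentially the same route as the paper, which handles this quoted result of \citep*{mcrae2024nonconvex} by proving the stronger Corollary~\ref{Z2ber}: the same decomposition $\bm{L}=\mathbb{E}(\bm{L})+(\bm{L}-\mathbb{E}(\bm{L}))$, Weyl's inequality, a Bennett/Bernstein tail bound plus union bound for the diagonal term and a $\lesssim\sqrt{n}$ spectral bound for the centered matrix, followed by Theorem~\ref{thm1}. Since $\frac{p-1}{p-3}\ge\frac{p+1}{p-1}$ for $p\ge4$, the threshold you derive implies the stated hypothesis, so your argument establishes the statement a fortiori.
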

First of all, our result does not prevent us from taking $p$ as small as $2$. Furthermore, our bound on $\delta$ is better in the regime when $p$ stays constant, but $n$ is large. Our proof of this theorem builds on ideas found in \citep*{ling2023local} and \citep*{mcrae2024nonconvex}.

\subsection{The Kuramoto model}
The homogeneous Kuramoto model \citep*{kuramoto1} is a system of $n$ oscillators on the sphere $\s^1$ identified with their phase $\theta_1, \dots, \theta_n$ evolving according to the following dynamics: for each $i\leq n$,
\begin{equation}\label{kuras1}
    \left\{
    \begin{array}{ll}
       \dot{\theta}_i(t) = \sum_{ j = 1}^n C_{ij} \sin(\theta_j(t)-\theta_i(t)) \mbox{ for } t \ge 0, \\
        \dot{\theta_i}(0)=\theta_i^0 \in \s^1.
    \end{array}
\right.
\end{equation}
The matrix $C$ is called a coupling matrix. If $C_{ij} > 0$ (resp. $C_{ij} <0$), the oscillators $\theta_i$ and $\theta_j$ are attractive (resp. repulsive). The Kuramoto model also exists on higher dimensional spheres $\s^{p-1}$, in which it describes the behavior of a system of $n$ oscillators $V=(V_1, \dots,V_n)^T \in (\s^{p-1})^n$, which evolve by maximizing the following energy function
\begin{equation*}
    \mathcal{E}(V_1,\dots,V_n) \defeg \sum_{i,j=1}^nC_{ij}\scal{V_i}{V_j}=\scal{C}{VV^T}.
\end{equation*}
In other words, the Kuramoto model describes the following (ascending) gradient flow dynamic on the product of spheres $(\s^{p-1})^n$
\begin{equation}\label{GDKura}
    \left\{
    \begin{array}{ll}
       \dot{V}(t) = \nabla \mathcal{E}(V) \mbox{ for } t \ge 0, \\
        V(0)=V^0\in(\s^{p-1})^n,
    \end{array}
\right.
\end{equation}
where the gradient is taken in the Riemannian sense on $(\s^{p-1})^n$. Note that model \eqref{kuras1}, which is the most studied in the literature, corresponds to the case $p = 2$ by parameterizing $\s^{p-1}=\s^1$ as $V_i=(\cos \theta_i, \sin \theta_i)$.
\begin{sloppypar}
One of the most interesting questions arising in the study of the Kuramoto model is whether or not the oscillators synchronize at infinity no matter the initial condition i.e. $\norm{V_i(t)-V_j(t)} \to 0$ as $t \to \infty$ for any $i,j$ and any generic initial condition $V^0 \in (\s^{p-1})^n$. A very recent work \citep*{jain2025random}, which extends results in \begin{raggedright}\citep*{abdalla2022expander}\end{raggedright}, indicates that if the matrix $C$ is drawn according to an Erdős–Rényi distribution, then the system \eqref{kuras1} is globally synchronizing if the underlying graph is connected.
\end{sloppypar}

To tackle the question of synchronization, we adopt the perspective of landscape analysis. We recall that, when the gradient flow of an analytic function converges, its limit is a first-order critical point of that function. Moreover, thanks to the center-stable manifold theorem, for a generic initial condition, the limit is a second-order critical point. Note that there is synchronization if and only if the limit is of the form $V=\un_n \bm{z}^T$ for some $\bm{z} \in \s^{p-1}$ (all rows of $V$ are equal to $\bm{z}^T$) or equivalently $VV^T = \un_n \un_n^T$. Therefore, to show that \eqref{GDKura} synchronizes, it is enough to have two properties:
\begin{enumerate}
    \item any maximizer $V$ of the energy function $\mathcal{E}$ satisfies $VV^T = \un_n \un_n^T$,
    \item all second-order critical points of $-\mathcal{E}$ are optimal.
\end{enumerate}
This is equivalent to the fact that $\un_n\un_n^T$ is an optimal rank $1$ solution to~\eqref{opt:MC}, and any SOCP of \eqref{opt:MCF} is globally optimal.

In the following, will focus on the Kuramoto model with possible repulsion between oscillators as studied in \citep*{ling2023local} with the following coupling matrix
\begin{equation}\label{Ckur}
    C_{ij} = \left\{
    \begin{array}{ll}
        ~~~1 & \mbox{with probability } 1-\alpha \mbox{ if } i \neq j, \\
        -1 & \mbox{with probability } \alpha  \mbox{ if } i \neq j, \\
        ~~~0 & \mbox{ if } i = j,
    \end{array}
\right.
\end{equation}
where $\alpha \in [0,1/2)$ is the intensity of repulsion : two oscillators are attractive with probability $1-\alpha$ and repulsive with probability $\alpha$. Note that it is possible to adapt the model and our proof to account for the absence of interaction between particles $i$ and $j$, by imposing $C_{ij}=0$ with some probability as is often the case when one studies the Kuramoto model. Our result on the benign landscape of the Kuramoto model on $\s^1$ is the following.

\begin{cor}\label{Kuracor}
    Consider the Kuramoto model on $\s^1$ (Equation~\eqref{kuras1}) with initial condition $\theta_1^0, \dots, \theta_n^0$ distributed uniformly at random on the sphere and a coupling matrix $C$ as in \eqref{Ckur} for some $\alpha \in [0, 1/2)$. For any $\varepsilon > 0$, and $n$ large enough, if
    \begin{equation*}
        \alpha < \frac{1}{2}-\frac{3}{2}\sqrt{\frac{(2+\varepsilon)\log n}{n}},
    \end{equation*}
    then the oscillators synchronize with probability at least $1-n^{-3}-n^{-\frac{\varepsilon}{3}}$.
\end{cor}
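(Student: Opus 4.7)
The plan is to identify the Kuramoto problem with an instance of Bernoulli $\mathbb{Z}_2$-synchronization, piggyback on the already-established Corollary~\ref{Z2ber} at $p=2$, and then upgrade the resulting benign-landscape statement for~\eqref{opt:MCF} into a synchronization statement for the continuous gradient flow~\eqref{GDKura} via the Łojasiewicz-plus-center-stable-manifold argument sketched in the paragraph preceding~\eqref{Ckur}.

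First I would perform the model identification. Reading~\eqref{Ckur} against~\eqref{Cber} with ground truth $\bm{x}=\un_n$ shows, entry by entry, that the Kuramoto coupling matrix has the same distribution as the Bernoulli $\mathbb{Z}_2$-synchronization cost matrix with flipping probability $(1-\delta)/2=\alpha$, i.e. $\delta=1-2\alpha\in(0,1]$. The Kuramoto model on $\s^1$ is precisely the flow~\eqref{GDKura} at $p=2$, as noted just after~\eqref{kuras1}. Plugging $p=2$ into the threshold of Corollary~\ref{Z2ber} gives $\delta>3\sqrt{(2+\varepsilon)\log n/n}$, which rearranges to $\alpha<\tfrac{1}{2}-\tfrac{3}{2}\sqrt{(2+\varepsilon)\log n/n}$, exactly our hypothesis. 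Corollary~\ref{Z2ber} then yields, with probability at least $1-n^{-3}-n^{-\varepsilon/3}$, that the Laplacian $\bm{L}=\ddiag(C\un_n\un_n^T)-C$ satisfies $\bm{L}\succeq 0$ and $\lambda_2(\bm{L})>0$, so that $\un_n\un_n^T$ is the unique rank-1 solution of~\eqref{opt:MC}, and every second-order critical point $V$ of~\eqref{opt:MCF} at $p=2$ obeys $VV^T=\un_n\un_n^T$, i.e. is of the form $V=\un_n z^T$ for some $z\in\s^1$.

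Next I would upgrade this landscape statement into convergence of the flow. Since $\mathcal{E}$ is real-analytic on the compact analytic manifold $(\s^1)^n$, Łojasiewicz's theorem ensures that every trajectory of~\eqref{GDKura} converges as $t\to\infty$ to a first-order critical point $V^\infty$. The center-stable manifold theorem, applied to the gradient flow (or equivalently to its time-one map), implies that the set of initial conditions whose trajectory converges to a strict saddle of $\mathcal{E}$—that is, a first-order critical point of $-\mathcal{E}$ that is not an SOCP of $-\mathcal{E}$—is contained in a countable union of embedded submanifolds of codimension at least one, hence has Lebesgue measure zero. Because the uniform distribution on $(\s^1)^n$ is absolutely continuous with respect to the Lebesgue measure, conditionally on the good event for $C$ the limit $V^\infty$ is an SOCP with probability one over $\theta^0$, and by the previous step then satisfies $V^\infty(V^\infty)^T=\un_n\un_n^T$, which is exactly the synchronization condition. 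Combining with the probability bound from Corollary~\ref{Z2ber} gives the claimed $1-n^{-3}-n^{-\varepsilon/3}$.

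The genuinely hard technical ingredient—sharp control of $\|\bm{L}-\mathbb{E}\bm{L}\|$ so that the condition number of $\bm{L}$ falls strictly below $p=2$—lives inside the already-proved Corollary~\ref{Z2ber}, via Hoeffding-type bounds on the row sums of $C-\mathbb{E}C$ together with a standard spectral-norm estimate on the off-diagonal fluctuation; here nothing further is needed on that front. The remaining subtlety I anticipate is handling the measure-zero set of bad initial conditions rigorously on the product of spheres, rather than in Euclidean space as the theorem is usually stated in the optimization literature; this is dealt with through local analytic charts and the fact that the restriction of Lebesgue measure to the manifold is equivalent to its Riemannian volume measure, so that a codimension-one bad set remains negligible under the uniform law on $(\s^1)^n$.
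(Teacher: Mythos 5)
Your proposal is correct and follows essentially the same route as the paper: the Kuramoto coupling matrix \eqref{Ckur} is identified with the Bernoulli $\mathbb{Z}_2$-synchronization cost matrix \eqref{Cber} via $\delta=1-2\alpha$, Corollary~\ref{Z2ber} is applied at $p=2$ (where $\frac{p+1}{p-1}=3$ gives exactly the stated threshold on $\alpha$), and the benign-landscape conclusion is converted into synchronization of the flow by the Łojasiewicz/center-stable-manifold argument the paper sketches before stating the corollary. The paper merely states the general-$\s^{p-1}$ version and writes ``the rest follows,'' so your more explicit treatment of the generic-initial-condition step is if anything more complete.
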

This is a particular case of synchronization of the Kuramoto model on the sphere $\s^{p-1}$, the proof of which can be found in ~\ref{proof_sec_3}. It improves on \citep*[corollary 2.3]{ling2023local}, which excludes synchronization on $\s^1$.

\section{Proof of the main theorem}\label{section4}
Throughout the proof, we fix a symmetric cost matrix $C \in \s^{n \times n}, \bm{x} \in \{\pm1\}^n$ such that the associated Laplacian matrix $\bm{L}=\ddiag(C\bm{xx}^T)-C$ is positive semidefinite and $\lambda_2(\bm{L})>0.$

Moreover, we assume without loss of generality that $\bm{x}=\un_n$ so that the rank one solution of~\eqref{opt:MC} is $X_*=\un_n\un_n^T$ and a solution of the factorized problem is $V_*=\frac{1}{\sqrt{p}}\un_n\un_p^T$. Indeed, if the solution of~\eqref{opt:MC} is $\bm{xx}^T$, then the solutions of the factorized problem are all vectors of the form $V_* = \bm{x}\bm{z}^T$ for $\bm{z}\in\s^{p-1}$ a unit vector. The change of variable $V \mapsto \diag(\bm{x})V$ does not affect the landscape of the factorized problem and changes solutions into $V_* = \un_n\bm{z}^T$. We refer to \citep*{mcrae2024benign} for more information on this change of variable.

Furthermore, note that due to the diagonal constraint, changing the diagonal of the cost matrix does not change the landscape of the problems. As such, we can replace the cost matrix $C$ with
\begin{equation*}
  C-\diag(C\un_n)=-\bm{L}
\end{equation*}
and study the problem
\begin{equation}\label{opt:MCL}
\begin{aligned}
\underset{X\in\s^{n \times n}}{\mbox{min}} \quad & \scal{\bm{L}}{X}\\
\textrm{s.t.} \quad & X \succeq 0\\
  &\mathrm{diag}(X)=\un_n,
\end{aligned}
\end{equation}
and its factorized form
\begin{equation}\label{opt:MCLF}
\begin{aligned}
\underset{V \in \R^{n \times p}}{\mbox{min}} \quad & \scal{\bm{L}}{VV^T}\\
\textrm{s.t.} \quad & \diag(VV^T) = \un_n.\\
\end{aligned}
\end{equation}
Similar changes were made in the proofs of \citep*{mcrae2024benign}. Note that we have by assumption $\bm{L} \succeq 0$, $\bm{L}\un_n=0$ and $\lambda_2(\bm{L}) > 0$.

\subsection{Formulas for the gradient and Hessian}
Before proving theorem~\ref{thm1}, we provide explicit formulas for the Riemannian gradient and Hessian of the cost function of~\eqref{opt:MCLF}. First, recall that the tangent space of the manifold $(\s^{p-1})^n$ at $V$ is
\begin{equation*}
    T_V(\s^{p-1})^n = \{\dot{V}\in\R^{n \times p}:\diag(\dot{V}V^T)=0\},
\end{equation*}
Let $P_{T_V}:\Rnp\to T_V(\s^{p-1})^n$ be the orthogonal projection onto the tangent space, i.e. $P_V(X)=X-\ddiag(XV^T)V$ for $X\in\Rnp$.

The gradient of the objective function in~\eqref{opt:MCF} at a point $V \in (\s^{p-1})^n$ is
\begin{equation}\label{FOCP}
  2\left(\bm{L}-\ddiag\left(\bm{L}VV^T\right)\right)V.
\end{equation}
In particular, $V$ is first-order critical if and only if $\left(\bm{L}-\ddiag\left(\bm{L}VV^T\right)\right)V=0$.

The Hessian at $V$ is
\begin{equation}\label{eq_hess}
\hess_V:\dot{V}\in T_V(\s^{p-1})^n \mapsto 2 P_{T_V}\left(\left(\bm{L}-\ddiag\left(\bm{L}VV^T\right)\right)\dot{V}\right)
\end{equation}
If $V$ is first-order critical, it is second-order critical if and only if $\hess_V$ is positive semidefinite. As $P_{T_V}$ is self-adjoint, that is equivalent to the fact that for all $\dot{V} \in T_V(\s^{p-1})^n$,
\begin{equation}\label{socp3}
    \scal{\hess_V(\dot{V})}{\dot{V}}=2\scal{\left(\bm{L}-\ddiag\left(\bm{L}VV^T\right)\right)\dot{V}}{\dot{V}} \ge 0.
\end{equation}
  
\subsection{Variational formulation}

We prove the contrapositive of the main theorem : if $V$ is a non optimal SOCP then the condition number of the Laplacian matrix is at least $p$. Let us fix $V \in (\s^{p-1})^n$ which is second-order critical, but not optimal for~\eqref{opt:MCLF}, i.e. $VV^T \neq \un_n \un_n^T$.

In this subsection, we rephrase the problem of showing that the condition number of the Laplacian matrix is at least $p$ as looking for a feasible point of a convex problem with large enough objective value. As will be explained in subsection~\ref{ss:remarks}, our proof admits a more concise formulation which does not necessitate to explicitly introduce the convex problem. However, the rephrasing is the main source of intuition for the proof, so we think it is worth presenting it.

We denote $v_1,\dots,v_p$ the columns of $V$. If we multiply $V$ by a suitable orthogonal $p\times p$ matrix (which does not change the fact that $V$ is second-order critical for~\eqref{opt:MCLF}), we can assume that $\scal{v_1}{\un_n}\geq 0$ and $\scal{v_k}{\un_n}=0$ for all $k\geq 2$. \footnote{If $V$ does not satisfy this condition, consider instead $VG$ where $G$ is an orthogonal matrix such that all but its first columns are orthogonal to $V^T \un_n$ and $\scal{G_{:1}}{V^T \un} > 0$.} This is summarized by the following assumption.

\begin{assumption*}\label{ass1}
  For $i=2,\dots,p$, $(V^T\un_n)_i = \scal{v_i}{\un_n} = 0$. Moreover, $\scal{v_1}{\un_n} \ge 0$. In particular,% we have that
  $$\norm{V^T \un_n} = \scal{v_1}{\un_n} \le \norm{v_1}\norm{\un_n} \le n.$$
\end{assumption*}
Showing that the condition number satisfies $\frac{\lambda_n(\bm{L})}{\lambda_2(\bm{L})}\ge p$ can be recast as showing that the value of the following optimization problem is at least $p$:

\begin{equation}\label{P1}
\begin{aligned}
\underset{(L,\mu) \in \s^{n \times n} \times \Rn}{\mbox{inf}} \quad & \frac{\lambda_n(L)}{\lambda_2(L)}\\
\textrm{s.t.} \quad & L \succeq 0,\\
& L\un_n = 0, \\
& \lambda_2(L) > 0, \\
& (L-\diag(\mu))V = 0, \\
&  \scal{(L-\diag(\mu))\dot{V}}{\dot{V}} \ge 0 \mbox{ for all } \dot{V} \in T_V(\s^{p-1})^n.
\end{aligned}
\end{equation}
Indeed, the point $(\bm{L},\hat{\mu})$, with $\hat{\mu} = \diag(\bm{L}VV^T)$ is feasible for the above problem since $V$ is a second-order point of~\eqref{opt:MCLF}. Hence, the condition number of $\bm{L}$ is greater than or equal to the optimal value of problem~\eqref{P1}.

A similar approach which consists in recasting the problem as finding the worse condition number among possible cost matrices is found in \citep*{zhang2024improved} where the author considers semidefinite programs and their Burer-Monteiro factorization without affine constraints.

Note that the first three constraints represent the fact that the semidefinite relaxation admits a rank $1$ solution and the last two, the first and second-order optimality conditions on $V$. Problem~\eqref{P1} is not convex, but we will see that it has the same optimal value as a convex problem.

First, define the set $K$ as the smallest convex cone containing $\{\dot{V}\dot{V}^T, \dot{V}\in T_V(\s^{p-1})^n \}$. The last constraint of problem~\eqref{P1} is equivalent to $\diag(\mu)-L \in K^\circ$, where $K^\circ$ is the polar cone of $K$ :
\begin{equation*}
    K^\circ = \{M\in\s^{n\times n}:\scal{M}{N}\le 0 \mbox{ for all } N \in K \}.
\end{equation*}
Now, consider the following convex minimization problem:
\begin{align}\label{primal}\tag{Primal}
\underset{(L,\mu) \in \s^{n \times n} \times \Rn}{\mbox{inf}} \quad & \lambda_n(P_{\perp}LP_{\perp})\\
\textrm{s.t.} \quad & P_{\perp}LP_{\perp} \succeq P_{\perp}, \nonumber\\
& (P_{\perp}LP_{\perp}-\diag(\mu))V = 0,  \nonumber\\
&  \diag(\mu)-P_{\perp}LP_{\perp} \in K^\circ,  \nonumber
\end{align}
where $P_{\perp}=I_n-n^{-1}\un_n\un_n^T$ is the projection matrix on the space orthogonal to $\un_n$. We have the following two lemmas, whose proofs can be found in~\ref{proof_sec_4}.
\begin{lem}\label{lemmeP1primal}
    Problem~\eqref{P1} and problem~\eqref{primal} have the same optimal value.
\end{lem}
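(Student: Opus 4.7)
The plan is to prove the equality of optimal values by establishing both inequalities, each via an explicit construction that transforms a feasible point of one problem into a feasible point of the other with the right objective value. The key structural observation is that \eqref{primal} is essentially a normalized version of \eqref{P1}: the constraint $P_\perp L P_\perp \succeq P_\perp$ forces $\lambda_2(P_\perp L P_\perp) \ge 1$ (since $P_\perp L P_\perp$ vanishes on $\un_n$ and dominates the identity on $\un_n^\perp$), so minimizing $\lambda_n(P_\perp L P_\perp)$ under this constraint is the same as minimizing $\lambda_n/\lambda_2$ under positive homogeneity.

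For the inequality $\text{val}\eqref{primal} \le \text{val}\eqref{P1}$, I would start from a feasible pair $(L,\mu)$ for \eqref{P1} and rescale, setting $L' = L/\lambda_2(L)$ and $\mu' = \mu/\lambda_2(L)$ (legal since $\lambda_2(L) > 0$). Because $L\un_n = 0$, one has $P_\perp L' P_\perp = L'$, hence the semidefinite constraint $P_\perp L' P_\perp \succeq P_\perp$ reduces to $L' \succeq P_\perp$, which holds because $L'$ and $P_\perp$ both vanish on $\un_n$ while on $\un_n^\perp$ we have $L' \succeq \lambda_2(L') I = I$. The constraint $(L'-\diag(\mu'))V=0$ and the polar cone membership $\diag(\mu')-L' \in K^\circ$ are both preserved under positive scaling. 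The objective value is $\lambda_n(P_\perp L' P_\perp) = \lambda_n(L)/\lambda_2(L)$, matching that of $(L,\mu)$ in \eqref{P1}.

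For the reverse inequality, given $(L,\mu)$ feasible for \eqref{primal}, I would set $L' = P_\perp L P_\perp$. Since the constraints of \eqref{primal} depend on $L$ only through $P_\perp L P_\perp$, one may equivalently assume $L = L'$. Then $L' \succeq P_\perp \succeq 0$, $L'\un_n=0$, and the eigenvalue comparison $L' \succeq P_\perp$ combined with $L'\un_n = 0$ yields $\lambda_2(L') \ge 1 > 0$, so $L'$ satisfies the three ``Laplacian-type'' constraints of \eqref{P1}. The last two constraints of \eqref{P1} are exactly those of \eqref{primal} with $L$ replaced by $L'$. Hence $(L',\mu)$ is feasible for \eqref{P1}, with objective $\lambda_n(L')/\lambda_2(L') \le \lambda_n(L') = \lambda_n(P_\perp L P_\perp)$, the \eqref{primal} objective.

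Taking the infimum on each side of both constructions yields the two inequalities and thus equality. There is no real obstacle: the lemma is essentially a bookkeeping check that the constraint $P_\perp L P_\perp \succeq P_\perp$ is the correct convex surrogate for normalizing $\lambda_2(L)$ to $1$, and that all other constraints are homogeneous or depend only on $P_\perp L P_\perp$. The only point requiring care is verifying that $L' = P_\perp L P_\perp$ automatically has $\lambda_2(L') \ge 1$ and hence satisfies the strict lower bound $\lambda_2 > 0$ demanded in \eqref{P1}.
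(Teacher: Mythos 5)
Your proof is correct and essentially identical to the paper's: both directions use the same constructions (rescaling $L \mapsto L/\lambda_2(L)$ for one inequality, and passing to $P_\perp L P_\perp$ while bounding $\lambda_n/\lambda_2 \le \lambda_n$ for the other), and your verification that $L' \succeq P_\perp$ forces $\lambda_2(L') \ge 1$ matches the paper's reasoning.
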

In order to find a lower bound on the convex problem~\eqref{primal}, a natural idea is to introduce the dual problem and find an appropriate dual certificate.
\begin{lem}\label{lemdual}
    The dual problem of~\eqref{primal} is
\begin{align}\label{opt:dual}\tag{Dual}
\underset{(W,Z,H)\in\Rnp \times \s^{n \times n} \times \s^{n \times n}}{\sup} \quad & \left\langle Z, P_{\perp} \right\rangle  \nonumber\\
\mathrm{s.t.} \quad & Z \succeq 0, \nonumber\\
  & H \in K,  \nonumber\\
  & \diag(WV^T) = \diag(H),  \nonumber\\
  & M = P_{\perp}\left(Z + H -\frac{1}{2}\left(WV^T + VW^T\right)\right)P_{\perp},  \nonumber\\
  & M \succeq 0,  \nonumber\\
  & \tr(M) \le 1. \nonumber
\end{align}
\end{lem}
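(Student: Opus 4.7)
The plan is a direct Lagrangian-duality calculation.

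First, I rewrite the objective using the variational formula for the largest eigenvalue. Every feasible $L$ satisfies $P_\perp L P_\perp \succeq P_\perp \succeq 0$, so
\[
  \lambda_n(P_\perp L P_\perp) \;=\; \sup\{\langle M, P_\perp L P_\perp\rangle : M \succeq 0,\ \tr(M) \le 1\}.
\]
Since $(P_\perp L P_\perp)\un_n = 0$, replacing $M$ by $P_\perp M P_\perp$ preserves the inner product and does not increase the trace, so the supremum may be restricted to $M$ satisfying $M = P_\perp M P_\perp$ --- which is exactly the form of $M$ appearing in~\eqref{opt:dual}.

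Second, I form the Lagrangian $\mathcal{L}$ by introducing multipliers $Z \succeq 0$ for $P_\perp L P_\perp - P_\perp \succeq 0$, $W \in \R^{n \times p}$ for $(P_\perp L P_\perp - \diag(\mu))V = 0$, and $H \in K$ for $\diag(\mu) - P_\perp L P_\perp \in K^\circ$ (the latter multiplier is taken in $K$ by the defining property of the polar cone: $\sup_{H \in K}\langle H, \diag\mu - P_\perp L P_\perp\rangle$ equals $0$ if the constraint holds and $+\infty$ otherwise). The primal then reads $\inf_{L,\mu}\sup_{M,Z,W,H}\mathcal{L}$, and the Lagrangian dual is $\sup_{M,Z,W,H}\inf_{L,\mu}\mathcal{L}$.

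Third, I collect the terms in $\mathcal{L}$ that depend linearly on $L$ and $\mu$. Using $\langle M, P_\perp L P_\perp\rangle = \langle P_\perp M P_\perp, L\rangle$ together with the symmetrization $\langle W, P_\perp L P_\perp V\rangle = \langle L, P_\perp \tfrac12(WV^T + VW^T) P_\perp\rangle$ (valid because $L$ is symmetric, so only the symmetric part of the coefficient matrix matters), the coefficient of $L$ is $P_\perp(M - Z - H + \tfrac12(WV^T + VW^T))P_\perp$, whose vanishing (required for the infimum to be finite) yields $M = P_\perp(Z + H - \tfrac12(WV^T + VW^T))P_\perp$. The coefficient of $\mu_i$ is $H_{ii} - (WV^T)_{ii}$, whose vanishing gives $\diag(H) = \diag(WV^T)$. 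When both coefficients vanish the only surviving term is $\langle Z, P_\perp\rangle$, which is the announced dual objective.

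The main challenge is purely bookkeeping: tracking the polar-cone constraint correctly (the bipolar theorem forces the multiplier $H$ to lie in $K$ itself rather than in a standard cone), symmetrizing $WV^T$ (dictated by the symmetry of $L$), and restricting $M$ to $M = P_\perp M P_\perp$ (dictated by the range of $P_\perp L P_\perp$). These three observations completely shape the form of~\eqref{opt:dual}.
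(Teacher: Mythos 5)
Your proof is correct and follows essentially the same route as the paper: Lagrangian duality with the same three multipliers $Z\succeq 0$, $W\in\Rnp$, $H\in K$, a swap of $\inf$ and $\sup$, and the same bookkeeping for the coefficients of $L$ and $\mu$. The only cosmetic difference is that you conjugate $\lambda_n(P_{\perp}LP_{\perp})$ up front via its variational formula over $\{M\succeq 0,\ \tr(M)\le 1\}$ (justifying it on the feasible set, where $P_{\perp}LP_{\perp}\succeq P_{\perp}\succeq 0$), whereas the paper keeps $\lambda_n$ inside the inner infimum and computes $\inf_L\left[\lambda_n(P_{\perp}LP_{\perp})-\scal{M}{L}\right]$ explicitly as the indicator of that same set.
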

By duality, to show that the optimal value of~\eqref{primal} is at least $p$, it suffices to exhibit a dual certificate, i.e. a point feasible for~\eqref{opt:dual} for which $\scal{Z}{P_{\perp}}\geq p$. A similar duality argument is used in \citep*{zhang2024improved}, to lower bound the condition number of the objective function.

\subsection{Remarks on a proof without using the dual\label{ss:remarks}}

\subsubsection{Possible proof without explicit dual formulation}

It is important to note that it is possible to provide a lower bound on the condition number of $\bm{L}$ without passing to the dual problem, provided that we have found an appropriate choice of matrices $(W,Z,H)$ that satisfy the constraints of problem \eqref{opt:dual}, except for the trace constraint on $M=P_\perp(Z+H - \frac{1}{2}(WV^T+VW^T))P_\perp$. Indeed, since $M \succeq 0$, to make the largest eigenvalue of $\bm{L}$ appear, we can upper bound $\scal{\bm{L}}{M}$ as follows 
\begin{equation}\label{ineqrmk1}
  \scal{\bm{L}}{M} \le \lambda_n(\bm{L})\tr(M).
\end{equation}
We are now going to lower bound $\scal{\bm{L}}{M}$ to make $\lambda_2(\bm{L})$ appear:
\begin{align}
    \scal{\bm{L}}{M} & = \scal{\bm{L}}{P_\perp (Z + H -\frac{1}{2}(WV^T+VW^T)) P_\perp} \nonumber \\
    & = \scal{\bm{L}}{P_\perp Z} + \scal{\bm{L}}{H} -\scal{\bm{L}}{WV^T} \nonumber \\
                     & = \scal{\bm{L}}{P_\perp Z} + \scal{\bm{L}}{H} -\underbrace{\scal{\bm{L} - \ddiag(\bm{L}VV^T)}{WV^T}}_{\mbox{\footnotesize $= 0$ since \eqref{FOCP} holds}} \nonumber\\
  & \textcolor{white}{=} -\scal{\ddiag(\bm{L}VV^T)}{WV^T} \nonumber\\
    & = \scal{\bm{L}}{P_\perp Z} + \scal{\bm{L}}{H} - \underbrace{\scal{\ddiag(\bm{L}VV^T)}{H}}_{\mbox{\footnotesize since $\diag(H) = \diag(WV^T)$}} \label{eq:use_diag_constraint} \\
    & = \scal{\bm{L}}{P_\perp Z} + \underbrace{\scal{\bm{L} - \ddiag(\bm{L} VV^T)}{{H}}}_{\mbox{\footnotesize $\ge 0$ since \eqref{socp3} \mbox{holds}} }  \label{eq:use_2nd_order} \\
    & \ge \lambda_2(\bm{L}) \tr(P_\perp Z). \label{eq:lb_lambda2}
\end{align}
The second equality comes from the fact that $P_\perp \bm{L} = \bm{L}$ because $\bm{L} \un_n = 0$, and the last one from the fact that $\bm{L}\succeq \lambda_2(\bm{L}) P_{\perp}$ as all eigenvectors which are not associated with the eigenvalue $0$ are orthogonal to $\un_n$. Combining the two bounds on $\scal{\bm{L}}{M}$, we get, if $\tr(M) > 0$,
\begin{equation}\label{inegcond}
    \lambda_2(\bm{L})\tr (P_\perp Z) \le \lambda_n(\bm{L}) \tr(M) \implies \frac{\tr(P_{\perp}Z)}{\tr(M)} \le \frac{\lambda_n(\bm{L})}{\lambda_2(\bm{L})}.
\end{equation}
Thus, finding an appropriate $(W,Z,H)$ such that $\tr(P_\perp Z) \ge p\tr(M)$ proves the theorem. The aforementioned remark is, of course, just a rephrasing of weak duality without explicitly computing the dual, but a similar approach was used in \citep*{ling2023local} to derive the main theorem without passing to the dual formulation.
\subsubsection{Rephrasing of the proof of \citep*[Theorem 2.1]{ling2023local} in terms of dual certificate}
We can reinterpret the proof of the \citep*[Theorem 2.1]{ling2023local} for $d=1$ as finding an appropriate triplet $(W,Z,H)$ and following the reasoning described in the previous paragraph. Indeed, in the proof, the author makes the implicit choice
\begin{align*}
    W & = (p-1) V, \\
    Z & = (p-1) VV^T, \\
    H & = (p-2)\un_n\un_n^T + (VV^T)^{\odot2}.
\end{align*}
We have $Z \succeq 0$ and $H_{ii} = (WV^T)_{ii} = (p-1)$ for all $i$. Moreover, the author shows that $H = \mathbb{E}(\dot{V}\dot{V}^T)$ with a careful choice of random tangent vectors $\dot{V}$ (see subsection~\ref{subsectionconstraints} for more detail); in other words, $H$ is an average of elements of $K$. \newline
To further parallel our proof to that of Ling, with the above choice of dual certificate, we note that Equation~\eqref{ineqrmk1} corresponds to the Ling's equation (3.6), Equation~\eqref{eq:lb_lambda2} to (3.7) and line~\eqref{eq:use_diag_constraint} to the equation between (3.6) and (3.7). \newline
With these choices of dual certificates, one has
\begin{align*}
    M & = P_\perp (Z + H - \frac{1}{2}(WV^T + VW^T)) P_\perp \\
    & = P_\perp((p-2)\un_n\un_n^T + (VV^T)^{\odot 2} + (p-1)VV^T -(p-1)VV^T)P_\perp \\
    & = P_\perp(VV^T)^{\odot 2}P_\perp \\
    & \succeq 0.
\end{align*}
Then, thanks to \eqref{inegcond}, one gets
\begin{equation*}
    \frac{\lambda_n(\bm{L})}{\lambda_2(\bm{L})} \ge \frac{\tr(P_\perp Z)}{\tr(M)} = (p-1)\frac{\tr(P_\perp VV^T)}{\tr(P_\perp (VV^T)^{\odot 2} P_\perp )}\footnote{Note that $\tr(P_\perp (VV^T)^{\odot 2}P_\perp) > 0$. If it were not the case then $V$ would be optimal.}.
\end{equation*}
We have on the one hand
\begin{align*}
    \tr(P_\perp VV^T) & = \tr(VV^T) - n^{-1} \scal{VV^T}{\un_n \un_n} \\
    & \ge \frac{n-n^{-1}\sum_{ij}\scal{v_i}{v_j}^2}{2} \mbox{ using \citep*[Lemma~3.2]{ling2023local}}.
\end{align*}
On the other hand
\begin{align*}
    \tr(P_\perp (VV^T)^{\odot 2} P_\perp) & = \tr((VV^T)^{\odot 2}) - n^{-1}\scal{\un_n \un_n}{(VV^T)^{\odot 2}} \\
    & = n - n^{-1}\sum_{ij}\scal{v_i}{v_j}^2.
\end{align*}
Combining the three previous inequalities, we get the bound proved by the author
\begin{equation*}
    \frac{\lambda_n(\bm{L})}{\lambda_2(\bm{L})} \ge \frac{p-1}{2}.
  \end{equation*}
The main difference between our proof and that of \citep*{ling2023local}, other than the primal-dual formulation, is a more careful definition of $W$; our choices of $Z$ and $H$ are the same up to multiplicative constants.
\subsection{Choice of the dual certificate}\label{subsectionconstraints}
In the following, we aim to find an adequate dual certificate $(W_*,Z_*,H_*)$. A natural choice for $Z_*$ (which ensures $\scal{Z_*}{P_{\perp}}=p$) is
\begin{equation*}%\label{Zval}
    Z_* \defeg \frac{p}{\scal{P_\perp}{VV^T}} VV^T.
\end{equation*}
Note that since $VV^T$ is not colinear to $\un_n\un_n^T$, $\scal{P_\perp}{VV^T} > 0$ so $Z_*$ is well defined.

For $H_*$, we choose
\begin{equation}\label{Hval}
    H_* \defeg \beta((p-2)\un_n \un_n^T + (VV^T)^{\odot{2}}) \mbox{ for some } \beta \ge 0,
\end{equation}
which belongs to $K$. Indeed, \citep*{ling2023local} and \citep*{mcrae2024benign} showed that $H_* = \beta \mathbb{E}(\dot{V}\dot{V}^T)$ with $\dot{V} = P_{T_V}(\un_n \Phi^T)$ and $\Phi \sim \mathcal{N}(0,I_p)$. Since $\dot{V}\dot{V}^T \in K$, by convexity and closure of $K$, it holds that $\mathbb{E}(\dot{V}\dot{V}^T) \in K$.

There is no straightforward choice for $W_*$. A natural one would be $W_*=\beta(p-1)V$ as it would satisfy the diagonal constraint:
\begin{align*}
  \diag(W_*V^T)
  & = \beta(p-1)\un_n \\
  & = \beta\left((p-2)\un_n + \diag\left((VV^T)^{\odot 2}\right)\right) \\
  & =\diag(H_*).
\end{align*}
However, numerical experiments suggest that it does not work. Fortunately, this can be corrected by adding to $\beta(p-1)V$ a matrix proportional to $W_*'=\un_n\un_n^TV + \varepsilon$, for some $\varepsilon\in\R^{n\times p}$ chosen so that $\diag(W_*'V^T)=0$.
We choose $\varepsilon=-\diag(VV^T\un_n)V$ (so that $W'_*=P_{T_V}(\un_n\un_n^TV)$). Under assumption~\ref{ass1},
\begin{equation*}
W'_* = \scal{v_1}{\un_n}\left(\un_ne_1 - \diag(v_1)V\right),
\end{equation*}
which suggests the choice
\begin{equation}\label{Wval}
    W_* = \beta(p-1)V + \delta (\un_ne_1 - \diag(v_1)V)  \mbox{ for some } \delta \in \R.
\end{equation}
\subsection{Constraints}
The goal now is to find $\beta, \delta$ such that the dual certificate $(W_*,Z_*,H_*)$ defined in the previous subsection satisfies the constraints of problem~\eqref{opt:dual} and $\scal{Z_*}{P_{\perp}} \ge p$.

The definition of $Z_*$ immediately implies
\begin{equation*}
  \scal{Z_*}{P_{\perp}} = p \mbox{ and } Z_* \succeq 0.
\end{equation*}
Furthermore, $H_*$ defined in~\eqref{Hval} is in $K$ if $\beta \ge 0$, and the definition of $W_*$ ensures that the equality $\diag(W_*V^T) = \diag(H_*)$ holds true. Therefore, we only have to find $\beta, \delta$ such that
\begin{subequations}
\begin{align}
\label{cont1}
  \beta & \ge 0 \\ \label{cont2}
  M_* &\succeq 0 \\ \label{cont4}
  \tr(M_*)&\leq 1,
\end{align}
\end{subequations}
where of course $M_* = P_{\perp}(Z_* + H_* -\frac{1}{2}(W_*V^T + VW_*^T))P_{\perp}$.

For the positive semidefiniteness of $M_*$, we have the following lemma, proved in~\ref{proof_sec_4}.

\begin{lem}\label{lemb}
    Under assumption~\ref{ass1}, if $\beta\geq 0$, $M_*$ is positive semidefinite if
    \begin{equation}
      \label{H2}
      \left(\frac{p}{2(p-1)\scal{P_{\perp}}{VV^T}}\right)^2
               \ge \left(\beta - \frac{p}{2(p-1)\scal{P_{\perp}}{VV^T}}\right)^2
                + \left(\frac{\delta}{2 \sqrt{p-1}} \right)^2.
    \end{equation}
\end{lem}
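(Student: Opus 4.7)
The plan is to expand $M_*$ explicitly, simplify using $P_{\perp}\un_n = 0$, and then exhibit the result as $P_{\perp}[V\mid V_1]\,K\,[V\mid V_1]^T P_{\perp}$ plus a visibly positive semidefinite remainder, where $K$ is a $2p\times 2p$ block matrix whose positive semidefiniteness (by Schur complement) is exactly hypothesis~\eqref{H2}. Write $a \defeg \frac{p}{\scal{P_{\perp}}{VV^T}}$ so that $Z_* = a\,VV^T$.

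First I would substitute the explicit forms of $Z_*$, $H_*$, $W_*$ and compute $\tfrac{1}{2}(W_*V^T+VW_*^T) = \beta(p-1)VV^T + \tfrac{\delta}{2}(\un_n v_1^T + v_1\un_n^T) - \tfrac{\delta}{2}(\diag(v_1)VV^T + VV^T\diag(v_1))$. Using $P_{\perp}\un_n = 0$, both the rank-one term $\beta(p-2)\un_n\un_n^T$ coming from $H_*$ and the cross terms $\un_n v_1^T + v_1\un_n^T$ vanish after conjugation, leaving
\[ M_* = P_{\perp}\bigl[(a - \beta(p-1))VV^T + \beta(VV^T)^{\odot 2} + \tfrac{\delta}{2}\bigl(\diag(v_1)VV^T + VV^T\diag(v_1)\bigr)\bigr]P_{\perp}. \]

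Next, defining $V_l \defeg \diag(v_l)V$, a pointwise check gives $(V_lV_l^T)_{ij} = v_l(i)v_l(j)(VV^T)_{ij}$, and summing over $l$ using $\sum_l v_l(i)v_l(j) = (VV^T)_{ij}$ yields the identity $(VV^T)^{\odot 2} = \sum_{l=1}^p V_lV_l^T$. Combined with $\diag(v_1)VV^T = V_1V^T$, peeling off the $l=1$ slice and regrouping lets one write
\[ M_* = \beta\sum_{l=2}^p (P_{\perp}V_l)(P_{\perp}V_l)^T + P_{\perp}[V\mid V_1]\,K\,[V\mid V_1]^T P_{\perp}, \qquad K = \begin{pmatrix} (a - \beta(p-1))I_p & \tfrac{\delta}{2}I_p \\ \tfrac{\delta}{2}I_p & \beta I_p \end{pmatrix}. \]
Since $\beta \ge 0$, the first sum is PSD, so it suffices to check $K \succeq 0$.

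By the block Schur complement criterion, $K \succeq 0$ iff $\beta \ge 0$, $a - \beta(p-1)\ge 0$, and $\beta(a-\beta(p-1)) \ge (\delta/2)^2$; the second of these is automatic from the third together with $\beta \ge 0$. Setting $r \defeg \frac{a}{2(p-1)}$, hypothesis~\eqref{H2} reads $r^2 \ge (\beta-r)^2 + (\delta/(2\sqrt{p-1}))^2$, which rearranges to $\beta(2r - \beta) \ge \delta^2/(4(p-1))$, i.e.\ to the Schur inequality $\beta(a-\beta(p-1)) \ge (\delta/2)^2$. The main difficulty in this plan is not computational but structural: recognizing the decomposition $(VV^T)^{\odot 2} = \sum_l V_lV_l^T$ and then peeling off precisely the $l=1$ slice $V_1V_1^T$, so that it fits together with the cross term $\tfrac{\delta}{2}(V_1V^T + VV_1^T)$ to form the clean $2p\times 2p$ Schur complement for which~\eqref{H2} emerges as the natural sharp condition, rather than through an ad hoc Cauchy--Schwarz bound as in~\citep*{ling2023local}.
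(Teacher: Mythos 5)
Your proposal is correct and follows essentially the same route as the paper: the expansion of $M_*$, the use of $(VV^T)^{\odot 2}=\sum_{l}\diag(v_l)VV^T\diag(v_l)$ to isolate the $v_1$ slice (the paper phrases this as the bound $(VV^T)^{\odot 2}\succeq\diag(v_1)VV^T\diag(v_1)$ via the Schur product theorem, which is the same decomposition with the $l\ge 2$ remainder discarded), the factorization through $\begin{pmatrix}V&\diag(v_1)V\end{pmatrix}(S\otimes I_p)\begin{pmatrix}V&\diag(v_1)V\end{pmatrix}^T$, and the reduction of~\eqref{H2} to $\det S=\beta\bigl(\tfrac{p}{\scal{P_\perp}{VV^T}}-(p-1)\beta\bigr)-\tfrac{\delta^2}{4}\ge 0$. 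The only cosmetic differences are your use of the Schur complement in place of the principal-minor check for the $2\times 2$ matrix $S$, with the same minor caveat in both arguments that nonnegativity of the $(1,1)$ entry must be extracted from the determinant condition.
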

The idea of the proof is to write $M_* = R (I_p\otimes S) R^T$ for an appropriate $R\in\R^{n\times 2p}$ and
\begin{equation*}
    S = \begin{pmatrix}
      \frac{p}{\scal{P_\perp}{VV^T}}-(p-1)\beta & \frac{\delta}{2} \\
      \frac{\delta}{2} & \beta \\
    \end{pmatrix}.
  \end{equation*}
It is straightforward to see that $M_* \succeq 0$ if $S \succeq 0$ and the latter is verified under condition \eqref{H2}.
Now, the trace of $M_*$ is given by
\begin{align*}
    \tr (M_*) & = \tr (P_{\perp}Z_*) + \tr (P_{\perp}H_*) - \tr (P_{\perp}W_*V^T) \\
              & = p + \beta\scal{P_\perp}{(VV^T)^{\odot 2}}
                - \tr (P_{\perp}(\beta(p-1)VV^T -\delta \diag(v_1) VV^T)) \\
              & = \beta\scal{P_\perp}{(VV^T)^{\odot 2}-(p-1)VV^T}
                + \delta \scal{P_{\perp}}{\diag(v_1)VV^T} + p.
\end{align*}
We must therefore find $\beta\geq 0$ and $\delta$ satisfying Equation~\eqref{H2} such that
\begin{gather}
  t_1\beta
  + \frac{t_2 \delta}{2\sqrt{p-1}} \geq p-1, \label{H3}\\
  \mbox{where }t_1 = \scal{P_\perp}{(p-1)VV^T - (VV^T)^{\odot 2}} \nonumber \\
  \mbox{ and }t_2 = - 2\sqrt{p-1} \scal{P_{\perp}}{\diag(v_1)VV^T}.\nonumber
\end{gather}
We set
\begin{align*}
  \beta & = \frac{p}{2(p-1)\scal{P_{\perp}}{VV^T}}
          \left(1 + \frac{t_1}{\sqrt{t_1^2+t_2^2}}\right), \\
  \delta & = \frac{p}{\sqrt{p-1}\scal{P_{\perp}}{VV^T}} \frac{t_2}{\sqrt{t_1^2+t_2^2}}.
\end{align*}
With this definition, Equation~\eqref{H2} is true, as a direct computation shows. It remains to show that Equation~\eqref{H3} is also true, which is equivalent to
\begin{align*}
  \sqrt{t_1^2+t_2^2}
  \geq \frac{2(p-1)^2 \scal{P_{\perp}}{VV^T}}{p} - t_1,
\end{align*}
and therefore implied by $t_1^2+t_2^2 \geq
\left(\frac{2(p-1)^2 \scal{P_{\perp}}{VV^T}}{p} - t_1\right)^2$. This last inequality is equivalent, from the definitions of $t_1,t_2$, to
\begin{align*}
    0 & \le t_2^2 - \frac{4(p-1)^4}{p^2}\scal{P_\perp}{VV^T}^2 + \frac{4(p-1)^2}{p}\scal{P_\perp}{VV^T}t_1 \\
    & = 4(p-1)\scal{P_\perp}{\diag(v_1)VV^T}^2 - \frac{4(p-1)^4}{p^2}\scal{P_\perp}{VV^T}^2 \\
    &\textcolor{white}{=} + \frac{4(p-1)^3}{p}\scal{P_\perp}{VV^T}^2 - \frac{4(p-1)^2}{p}\scal{P_\perp}{VV^T}\scal{P_\perp}{(VV^T)^{\odot 2}} \\
    & = 4(p-1)\big(\scal{P_\perp}{\diag(v_1)VV^T}^2 + \frac{(p-1)^2}{p^2}\scal{P_\perp}{VV^T}^2 \\
    & \textcolor{white}{=} - \frac{p-1}{p}\scal{P_\perp}{VV^T}\scal{P_\perp}{(VV^T)^{\odot 2}}\big),
\end{align*}
which simplifies to :
\begin{align}
  \scal{P_{\perp}}{\diag(v_1)VV^T}^2
  & + \frac{(p-1)^2}{p^2}\scal{P_{\perp}}{VV^T}^2 \nonumber \\
  & - \frac{p-1}{p}\scal{P_{\perp}}{VV^T}\scal{P_\perp}{(VV^T)^{\odot 2}} \geq 0.
    \label{eq:final_lemma_44}
\end{align}
We observe that
\begin{align}
  \scal{P_{\perp}}{(VV^T)^{\odot 2}}
  & = \scal{I_n - \frac{1}{n}\un_n\un_n^T}{(VV^T)^{\odot 2}} \nonumber \\
  & = n - \frac{||VV^T||_F^2}{n} \label{eq:norm_VV2_a} \\
  & = n - \frac{||V^TV||_F^2}{n} \nonumber \\
  & = n - \frac{\sum_{i,j=1}^p\scal{v_i}{v_j}^2}{n} \nonumber \\
  & \leq n - \frac{\sum_{i=1}^p ||v_i||^4}{n} \nonumber \\
  & = n - \frac{||v_1||^4}{n} -  \frac{\sum_{i=2}^p ||v_i||^4}{n} \nonumber \\
  & \leq n - \frac{||v_1||^4}{n} -  \frac{\left(\sum_{i=2}^p ||v_i||^2\right)^2}{n(p-1)}
  \label{eq:norm_VV2_b}\\
  & = n - \frac{||v_1||^4}{n} -  \frac{\left(n-||v_1||^2\right)^2}{n(p-1)}.
    \label{eq:norm_VV2_c}
\end{align}
At line~\eqref{eq:norm_VV2_a}, we used $\diag(VV^T)=\un_n$. At line~\eqref{eq:norm_VV2_b}, we used Cauchy-Schwarz and, at line~\eqref{eq:norm_VV2_c}, we used that $\sum_{i=1}^p ||v_i||^2 = ||V||_F^2 = \operatorname{Tr}(\diag(VV^T))=n$.

In addition, from assumption~\ref{ass1},
\begin{align*}
  \scal{P_{\perp}}{VV^T}
  & = \scal{I_n-\frac{1}{n}\un_n\un_n^T}{VV^T} \\
  & = n - \frac{\scal{v_1}{\un_n}^2}{n},
\end{align*}
and
\begin{align*}
  \scal{P_{\perp}}{\diag(v_1)VV^T}
  & = \scal{\diag(v_1)}{VV^T} - \frac{1}{n}\scal{\un_n\un_n^T}{\diag(v_1)VV^T} \\
  & = \scal{v_1}{\diag(VV^T)} - \frac{1}{n} \underbrace{\scal{v_1}{VV^T\un_n}}_{
    =\sum_{k=1}^p \scal{v_1}{v_kv_k^T\un_n}}\\
  & = \scal{v_1}{\un_n}\left(1 - \frac{||v_1||^2}{n}\right)
    \mbox{ as }v_k^T\un_n=0\mbox{ for }k\geq 2.
\end{align*}

We combine the last three equations. They show that the left-hand side of Equation~\eqref{eq:final_lemma_44} is lower bounded by
\begin{align*}
    \scal{P_{\perp}}{\diag(v_1)VV^T}^2 & + \frac{(p-1)^2}{p^2}\scal{P_{\perp}}{VV^T}^2 \\
   & \textcolor{white}{ = } - \frac{p-1}{p}\scal{P_{\perp}}{VV^T}\scal{P_\perp}{(VV^T)^{\odot 2}} \\
  & \ge  \scal{v_1}{\un_n}^2\left(1 - \frac{||v_1||^2}{n}\right)^2
    + \frac{(p-1)^2}{p^2}\left(n-\frac{\scal{v_1}{\un_n}^2}{n}\right)^2 \\
  & - \frac{p-1}{p} \left(n-\frac{\scal{v_1}{\un_n}^2}{n}\right)
    \left(n-\frac{||v_1||^4}{n}-\frac{(n-||v_1||^2)^2}{n(p-1)}\right) \\
  & = ||v_1||^4
    - \frac{2}{p} \left( \frac{p-1}{n}\scal{v_1}{\un_n}^2
    + n\right) ||v_1||^2 \\
  & \qquad + \frac{1}{p^2}\left(\frac{(p-1)^2}{n^2}\scal{v_1}{\un_n}^4
    + 2(p-1) \scal{v_1}{\un_n}^2
    + n^2\right) \\
  & = \left(||v_1||^2 - \frac{1}{p}\left(n+\frac{p-1}{n}\scal{v_1}{\un_n}^2\right) \right)^2 \\
  & \geq 0.
\end{align*}
Equation~\eqref{eq:final_lemma_44} is therefore true, which concludes.

\paragraph{Acknowledgements}

The authors would like to thank Antonin Chambolle for helpful discussions and feedback for this work, partially funded by ANR-23-PEIA-0004 (PDE-AI), as well as Andrew McRae. Faniriana Rakoto Endor and Irène Waldspurger have been supported by the French government under management of Agence Nationale de la Recherche as part of the “Investissements d’avenir” program, reference ANR19-P3IA-0001 (PRAIRIE 3IA Institute).

\appendix
\section{Technical lemmas}
\label{Appendix}
\subsection{Proofs of section~\ref{section2}}\label{proof_sec_2}
\begin{proof}[Proof of proposition~\ref{propopti}]
  Let us set $\bm{x}=\un_n$ and let $V\in(\s^{p-1})^n$ be such that
\begin{align}
    V^T\un_n & = 0, \label{contV_1}\\
    V^TV & = \frac{n}{p}I_p, \label{contV_2}\\
    \scal{v_i \odot v_j \odot v_k}{\un_n} & = 0, \mbox{ for all } 1 \le i,j,k \le p, \label{contV_3}
\end{align}
where the $v_l$'s are the columns of $V$. Such matrices $V$ exist at least when $p$ is even or $n$ is; an example is provided at the end of the proof.
Now, set
\begin{equation*}
C = -(P_V+pP_{V^{\perp}}-pP_{\un}),
\end{equation*}
where $P_V=\frac{p}{n}VV^T$ is the orthogonal projector onto $\operatorname{Range}(V)$, $P_{V^{\perp}}=I_n-\frac{p}{n}VV^T$ the projector onto $\operatorname{Range}(V)^{\perp}$ and $P_{\un}= \frac{1}{n}\un_n\un_n^T$ the projector onto $\R\un_n$.

Since $C\un_n=0$, the Laplacian matrix is $\bm{L}=-C=P_V+pP_{V^{\perp}}-pP_{\un}$. Its eigenvalues are $0$ (with eigenspace $\R\un_n$), $1$ (with eigenspace $\operatorname{Range}(V)$) and $p$ (with eigenspace $(\operatorname{Range}(V)\oplus \R\un_n)^{\perp}$). Therefore, $\bm{L}\succeq 0,\lambda_2(\bm{L})>0$ and its condition number is $p$.

    Using~\eqref{FOCP} and~\eqref{eq_hess}, $V$ is second-order optimal if
    \begin{align}
      SV&=0, \nonumber \\
      \scal{\hess_V(\dot{V})}{\dot{V}} =
        2\scal{S\dot{V}}{\dot{V}} & \ge 0, \forall~\dot{V} \in T_V(\s^{p-1})^{n} \label{eq_SVV},
    \end{align}
    where
    \begin{equation*}
        S\defeg\bm{L}-\ddiag(\bm{L}VV^T) =\bm{L}-I_n = (p-1)P_{V^\perp} - p P_{\un_n}.
    \end{equation*}
    It is clear that with this choice of $C$, $\bm{L}V=V$, hence $SV=0$. It remains to show that the Hessian is positive semidefinite at $V$. The difficulty stems from the fact that $S$ has a negative eigenvalue: $S\un_n=-\un_n$.
We first exhibit a subspace of $T_V(\s^{p-1})^n$ included in $\operatorname{Ker}\hess_V$. Then, we prove Equation~\eqref{eq_SVV} by decomposing $\dot{V}$ onto the kernel and its orthogonal.

Note that any matrix of the form
\begin{equation}\label{kerhess}
  \diag(Va)V-\un_n a^T,
\end{equation}
with $a\in \Rp$, belongs to $T_V(\s^{p-1})^n$ and to the kernel of $\hess_V$. Indeed, for any $a\in\Rp$,
\begin{align*}
  \frac{1}{2}
  \hess_V\left( \diag(Va)V-\un_n a^T \right)
  & = P_{T_V}\left( S(\diag(Va)V-\un_n a^T) \right) \\
  & = P_{T_V} \left( -\frac{p(p-1)}{n}VV^T\diag(Va)V \right) \\
  & + P_{T_V} ( \underbrace{(p-1)\diag(Va)V}_{ \in (T_V(\s^{p-1})^n)^\perp } )\\
  & -P_{T_V}\left((p-1)\un_na^T \right) \\
  & - P_{T_V} \left(\frac{p}{n}\un_n\un_n^T\diag(Va)V-p\un_na^T \right) \\
  & = P_{T_V} \left( -\frac{p(p-1)}{n}VV^T\diag(Va)V\right) \\
  & + P_{T_V}\Big(\un_na^T-\frac{p}{n} \un_na^T\underbrace{V^TV}_{=\frac{n}{p}I_n} \Big) \\
  & = -\left(\frac{p(p-1)}{n}\right) P_{T_V}\left( VV^T\diag(Va)V \right).
\end{align*}
For $1 \le j,k \le p$, we have
\begin{equation*}
  (V^T\diag(Va)V)_{jk} = \sum_{i=1}^p a_i \underbrace{\scal{v_i \odot v_j \odot v_k}{\un_n}}_{=0} = 0,
\end{equation*}
hence $V^T\diag(Va)V=0$, and $\hess_V  \left( \diag(Va)V-\un_n a^T \right) =0$.
    
Let us fix $\dot{V}\in T_V(\s^{p-1})^n$. It can be decomposed as $\dot{V}=X+Y$, for some $X,Y \in T_V(\s^{p-1})^n$ such that $X\in \ker{\hess_V}$ and $Y\in (\ker{\hess_V})^\perp$. Since $Y$ is orthogonal to the kernel of $\hess_V$, it is orthogonal to any matrix of the form~\eqref{kerhess}. Therefore, for any $a \in \Rp$,
    \begin{align*}
        0 & =\scal{\diag(Va)V-\un_na^T}{Y}&\\
        & = \scal{Va}{\diag(YV^T)}-\scal{\un_na^T}{Y}& \\
        & = -\scal{\un_na^T}{Y} &\mbox{(since }Y\in T_V(\s^{p-1})^n)\\
        & = -\scal{a^T}{\un_n^TY},&
    \end{align*}
    which implies that $\un_n^TY=0$. Hence, it holds that
    \begin{equation*}
        SY = (P_V+pP_{V^\perp}-I_n)Y - \frac{p}{n}\un_n\un_n^TY = (P_V+pP_{V^\perp}-I_n)Y.
    \end{equation*}
    Finally,
    \begin{align*}
        \scal{\hess_V(\dot{V})}{\dot{V}} & = \overbrace{\scal{\hess_V(X)}{X}}^{=0} + \overbrace{2\scal{\hess_V(X)}{Y}}^{=0} + \overbrace{\scal{\hess_V(Y)}{Y}}^{2\scal{SY}{Y}} \\
                                         & = 2\scal{(P_V+pP_{V^\perp}-I_n)Y}{Y} \\
                                         & = 2(p-1)\scal{P_{V^{\perp}}Y}{Y} \\
                                         & \geq 0.
    \end{align*}
%    Since $P_{V^\perp}+pP_{V^\perp}-I_n \succeq 0$, it is true that $\scal{\hess_V(\dot{V})}{\dot{V}} = 2\scal{P_V+pP_{V^\perp}-I_n}{YY^T} \ge 0$.

    To conclude, we show the existence of $V$ satisfying equations~\eqref{contV_1},~\eqref{contV_2} and~\eqref{contV_3}.
    For instance, when $p$ is even, for any $j \in\left\{1,\dots \frac{p}{2}\right\}$ and $i\in\{1,\dots,n\}$, we can set
\begin{equation*}
    V_{i,2j-1} = \sqrt{\frac{2}{p}}\cos\left(\frac{2 \pi m_j}{n} (i-1)\right) \mbox{ and }
    V_{i,2j} = \sqrt{\frac{2}{p}}\sin\left(\frac{2 \pi m_j}{n} (i-1)\right),
  \end{equation*}
  where $m_j=3j-2$. All three equations can be proved using similar computations. Let us for instance establish equality~\eqref{contV_3} in the case where $i,j,k$ are odd. We have

\begin{align*}
  \scal{v_i \odot v_j \odot v_k}{\un_n}
  & = \left( \frac{2}{p}\right)^{\frac{3}{2}} \sum_{l=0}^{n-1} \cos\left(\frac{2 \pi m_i}{n}l\right) \cos\left(\frac{2 \pi m_j}{n}l\right) \cos\left(\frac{2 \pi m_k}{n}l\right) \\
  & = \frac{1}{\sqrt{2p^3}} \sum_{l=0}^{n-1} \cos\left(\frac{2 \pi}{n}(m_i+m_j+m_k)l\right) \\
  & ~~~~~~~~~~~~~~~~~~~~~+ \cos\left(\frac{2 \pi}{n}(m_i+m_j-m_k)l\right) \\
  & ~~~~~~~~~~~~~~~~~~~~~+ \cos\left(\frac{2 \pi}{n}(m_i-m_j+m_k)l\right) \\
  & ~~~~~~~~~~~~~~~~~~~~~+ \cos\left(\frac{2 \pi}{n}(m_i-m_j-m_k)l\right).
\end{align*}
This sum is zero because one can check that, for any $\varepsilon_j,\varepsilon_k\in\{\pm 1\}$, $m_i+\varepsilon_j m_j +\varepsilon_k m_k \not\equiv 0[n]$.

If $p$ is odd but $n$ is even, we can make the same construction for the first $p-1$ columns of $V$ and add one last column whose entries alternate between $-\sqrt{\frac{1}{p}}$ and $\sqrt{\frac{1}{p}}$.

\end{proof}
\subsection{Proofs of section~\ref{section3}}\label{proof_sec_3}
\begin{proof}[Proof of Corollary~\ref{cor1}]
    The Laplacian matrix of $C$ defined in~\eqref{costZ2synch1} is
    \begin{equation*}
        \bm{L} = n(I_n-n^{-1}\bm{xx}^T) + \sigma(\ddiag( W \bm{xx}^T) - W).
    \end{equation*}
    Define the following matrix:
    \begin{equation*}
        \bm{L}^{ W} = \ddiag( W \bm{xx}^T) - W.
    \end{equation*}
    Since $I_n-n^{-1}\bm{xx}^T$ is the orthogonal projector on the orthogonal space of $\bm{x}$, its eigenvalues are $0$ (with multiplicity $1$) and $1$ (with multiplicity $n-1$).

    Therefore, using Weyl's inequality,
    \begin{align*}
        \lambda_n(\bm{L}) & \le n + \sigma \norm{\bm{L}^{ W}}, \\
        \lambda_2(\bm{L}) & \ge n - \sigma \norm{\bm{L}^{ W}}.
    \end{align*}
    We need to upper bound $\norm{\bm{L}^{ W}}$. The triangular inequality gives
    \begin{align*}
        \norm{\bm{L}^{ W}} \le \norm{ W \bm{x}}_{\infty} + \norm{ W}.
    \end{align*}
    Moreover, for all $\varepsilon' > 0$, it holds that
    \begin{equation}\label{eq:Wx_inf}
        \norm{ W \bm{x}}_{\infty} \le \sqrt{(2+\varepsilon')n \log n},
    \end{equation}
    with probability at least $1-n^{-\varepsilon'/2}$. Indeed, note that, for all $i\leq n$, $(W\bm{x})_i \sim \mathcal{N}(0,n-1)$. Therefore, from \citep*[Prop 2.1.2]{vershynin2018high}, for all $t>0$,
      \begin{equation*}
        \mathbb{P}(\lvert(W\bm{x})_i\rvert > t) \leq \sqrt{\frac{2(n-1)}{\pi}} \frac{e^{-\frac{t^2}{2(n-1)}}}{t}.
      \end{equation*}
    Applying a union bound and taking $t=\sqrt{(2+\varepsilon')n \log n}$ yields~\eqref{eq:Wx_inf}. Moreover, it is also true that, with probability at least $1-4e^{-n}$,
    \begin{equation*}
        \norm{W} \leq c_0 \sqrt{n},
    \end{equation*}
    for some universal constant $c_0>0$. This is an immediate consequence of \citep*[Corollary 4.3.6]{vershynin2018high}. Therefore, for any $\varepsilon>0$, it holds with probability at least $1-n^{-\varepsilon/4}-4e^{-n}$ that
    \begin{align*}
      \norm{\bm{L}^W}
      & \le \sqrt{\left(2+\frac{\varepsilon}{2}\right)n \log n} + c_0 \sqrt{n}
      \\
      & \le \sqrt{\left(2+\varepsilon\right)n \log n}
      \hskip 1cm\mbox{(for }n\mbox{ large enough).}
    \end{align*}
    Then, with the same probability,
    \begin{align*}
      \sigma <\frac{p-1}{p+1}\sqrt{\frac{n}{(2+\varepsilon)\log n}}
      & \iff \frac{n + \sigma\sqrt{(2+\varepsilon)n \log n} }{
        n-\sigma\sqrt{(2+\varepsilon)n \log n}}<p
         \\
        & \implies \frac{\lambda_n(\bm{L})}{\lambda_2(\bm{L})} < p.
    \end{align*}
    Furthermore, since $\lambda_2(\bm{L}) \ge n - \sigma \sqrt{(2+\varepsilon)n \log n}$, it is true that $\lambda_2(\bm{L}) > 0$  and $\bm{L} \succeq 0$ for $n$ large enough, with probability at least $1-n^{-\varepsilon/4}-4e^{-n}$. The conclusion follows from theorem~\ref{thm1}.
\end{proof}

\begin{proof}[Proof of corollary~\ref{Z2ber}]
  Let $\varepsilon>0$ be fixed.
    We can assume without loss of generality that the vector we want to reconstruct is $\bm{x}=\un_n$ (see section~\ref{section4} for more details). The Laplacian matrix is
    \begin{equation*}
        \bm{L} =\diag(C\un_n)-C.
    \end{equation*}
    Note that $\bm{L}$ can be decomposed as a principal term and a noise term as follows:
\begin{align*}
    \bm{L} & = \mathbb{E}(\bm{L})+(\bm{L}-\mathbb{E}(\bm{L})) \\
    & = \underbrace{\delta(nI_n-\un_n\un_n^T)}_{\mbox{principal term}} + \underbrace{(\bm{L}-\mathbb{E}(\bm{L}))}_{ \mbox{noise term}}.
\end{align*}
Therefore, using Weyl's inequality yields
\begin{align*}
    \lambda_n(\bm{L}) & \le \delta n + \norm{\bm{L}-\mathbb{E}(\bm{L})}, \\
    \lambda_2(\bm{L}) & \ge \delta n - \norm{\bm{L}-\mathbb{E}(\bm{L})}.
\end{align*}
In particular, as soon as $\norm{\bm{L}-\mathbb{E}(\bm{L})}<\delta n$, $\lambda_2(\bm{L})>0$ and
\begin{equation*}
\frac{\lambda_n(\bm{L})}{\lambda_2(\bm{L})}
\leq \frac{\delta n+ \norm{\bm{L}-\mathbb{E}(\bm{L})}}{\delta n- \norm{\bm{L}-\mathbb{E}(\bm{L})}},
\end{equation*}
so that, from theorem~\ref{thm1}, all second-order critical points are global minimizers if the right-hand side of the above is below $p$.

Note that
\begin{align*}
    \norm{\bm{L}-\mathbb{E}(\bm{L})} \le \norm{\diag(C\un_n)-\delta(n-1)I_n} + \norm{C - \delta(\un_n\un_n^T - I_n)}.
\end{align*}
For $1 \le i \le n$, we have the following equality:
\begin{equation*}
    (\diag(C\un_n)-\delta(n-1))_{ii} = \sum_{j \neq i} (C_{ij} - \delta).
\end{equation*}
Let $h(u)=(1+u)\log(1+u)-u=\frac{u^2}{2}(1+o_{u\to0}(1))$. Using Bennett's inequality, we get for $t \ge 0$
\begin{align*}
  \mathbb{P}\left(\left|\sum_{j \neq i} (C_{ij} - \delta)\right| > t\right)
  & \le 2\exp{\left(-\frac{(n-1)(1-\delta)}{1+\delta}
    h\left(\frac{t}{(n-1)(1-\delta)}\right)\right)} \\
  & \le 2\exp{\left(-\left(\frac{n-1}{1+\delta}\right)
    h\left(\frac{t}{n-1}\right)\right)}.
\end{align*}
The second inequality is true because $h$ is convex and $h(0)=0$, so $ah(x/a)\geq h(x)$ for all $x\geq 0,a\in]0;1]$.

We set $t=\sqrt{(2+\varepsilon')(1+\delta)n\log n}$ for some $\varepsilon' <\frac{\varepsilon}{2}$. Observe that $\frac{t}{n-1} \to 0$ when $n\to +\infty$, so $h\left(\frac{t}{n-1}\right)=\frac{t^2}{2n^2}(1+o(1))$ and
\begin{align*}
  \mathbb{P}\left(\left|\sum_{j \neq i} (C_{ij} - \delta)\right|>\sqrt{(2+\epsilon')(1+\delta)n\log n}\right)
  & \le 2 n^{-(1+\varepsilon'/2)(1+o_{n\to\infty}(1))}.
\end{align*}
Therefore, using a union bound, we get
\begin{equation*}
    \mathbb{P}\left(\norm{\diag(C\un_n)-\delta(n-1)I_n} \leq \sqrt{(2+\varepsilon')(1+\delta)n\log n} \right) \ge 1 - 2 n^{-(\varepsilon'/2+o(1))}.
\end{equation*}
Moreover, from \citep*[Lemma 2]{mcrae2024nonconvex}, with probability at least $1-n^{-3}$,
\begin{equation*}
  \norm{C - \delta(\un_n\un_n^T - I_n)}  \lesssim \sqrt{n},
\end{equation*}
This bound is negligible in front of $\sqrt{n\log n}$, for $n$ large enough, so that
\begin{equation*}
    \norm{\bm{L}-\mathbb{E}(\bm{L})} < \sqrt{(2+2\varepsilon')(1+\delta)n\log n},
\end{equation*}
with probability at least $1-n^{-3}-2 n^{-\frac{\varepsilon'}{3}}$. Therefore, we get the desired result if
\begin{align*}
  \frac{\delta n + \sqrt{(2+2\varepsilon')(1+\delta)n\log n}}{\delta n - \sqrt{(2+2\varepsilon')(1+\delta)n\log n}} < p
  & \iff \delta > \frac{1+\sqrt{1+ 4\left(\frac{p-1}{p+1}\right)^2
    \frac{n}{(2+2\varepsilon')\log n}}}{2\left(\frac{p-1}{p+1}\right)^2 \frac{n}{(2+2\varepsilon')\log n}}.
\end{align*}
In the regime when $n$ is large, recalling that $\varepsilon>2\varepsilon'$, this is implied by
\begin{align*}
  \delta > \frac{p+1}{p-1} \sqrt{\frac{(2+\varepsilon) \log n}{n}}.
\end{align*}
\end{proof}
\begin{proof}[Proof of corollary~\ref{Kuracor}]
    We will prove the following more general result of which Corollary~\ref{Kuracor} is a consequence.
    \begin{cor*}
    We consider the Kuramoto model on the sphere $\s^{p-1}$ with coupling matrix as defined in \eqref{Ckur}, with $\alpha \in[0,1/2)$. For any $\varepsilon > 0$ and $n$ large enough, if
    \begin{equation*}
      \alpha < \frac{1}{2} - \frac{p+1}{2(p-1)} \sqrt{\frac{(2+\varepsilon) \log n}{n}},
    \end{equation*}
    then the Kuramoto system \eqref{GDKura} synchronizes with probability at least $1-n^{-3}-n^{-\frac{\varepsilon}{3}}$.
    \end{cor*}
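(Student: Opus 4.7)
The plan is to reduce the statement to Corollary~\ref{Z2ber} by matching distributions. When one looks for synchronization of \eqref{GDKura}, the target consensus vector can be taken to be $\bm{x}=\un_n$. Under this choice, the Kuramoto coupling matrix~\eqref{Ckur} becomes identical in distribution to the Bernoulli $\mathbb{Z}_2$-synchronization cost matrix~\eqref{Cber} with parameter $\delta = 1-2\alpha$: indeed, $C_{ij}=1 = x_i x_j$ with probability $1-\alpha = \tfrac{1+\delta}{2}$ and $C_{ij}=-1=-x_ix_j$ with probability $\alpha = \tfrac{1-\delta}{2}$. The hypothesis $\alpha < \tfrac12 - \tfrac{p+1}{2(p-1)}\sqrt{(2+\varepsilon)\log n/n}$ is then exactly equivalent to $\delta > \tfrac{p+1}{p-1}\sqrt{(2+\varepsilon)\log n/n}$, which is the hypothesis of Corollary~\ref{Z2ber}.

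Next, I would invoke Corollary~\ref{Z2ber} to obtain that, on an event of probability at least $1-n^{-3}-n^{-\varepsilon/3}$, every second-order critical point of \eqref{opt:MCF} is a global minimizer. Inspecting the proof of Corollary~\ref{Z2ber}, the same event already provides the bound $\|\bm{L}-\mathbb{E}(\bm{L})\|< \delta n$, which forces $\bm{L}\succeq 0$ and $\lambda_2(\bm{L})>0$, so that $\un_n\un_n^T$ is the \emph{unique} rank-$1$ solution of~\eqref{opt:MC}. Consequently, every global maximizer $V \in (\mathbb{S}^{p-1})^n$ of $\mathcal{E}$ satisfies $VV^T = \un_n\un_n^T$, i.e.\ is synchronized.

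To translate this static information about SOCPs into a statement about the dynamical system \eqref{GDKura}, I would follow the landscape-analytic route already sketched in Section~\ref{section3}. Because $\mathcal{E}$ is a real-analytic function on the compact Riemannian manifold $(\mathbb{S}^{p-1})^n$, the Łojasiewicz–Simon gradient inequality implies that every trajectory of \eqref{GDKura} converges to a first-order critical point of $\mathcal{E}$. The center–stable manifold theorem further ensures that the set of initial conditions whose trajectory converges to a first-order critical point that fails to be second-order is a countable union of embedded submanifolds of codimension at least one, hence of Lebesgue measure zero. Since the uniform law on $(\mathbb{S}^{p-1})^n$ is absolutely continuous with respect to the Riemannian volume, the trajectory of \eqref{GDKura} converges almost surely (with respect to the initialization) to a second-order critical point of $-\mathcal{E}$, which by the previous paragraph must coincide with the synchronized state on the high-probability event.

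The main obstacle is conceptual bookkeeping rather than a new estimate: one must carefully combine the probabilistic event from Corollary~\ref{Z2ber} (which controls the noise of the Laplacian and simultaneously delivers uniqueness of the rank-one SDP solution) with the almost-sure event over the uniform initialization (through the center-stable manifold theorem). No new random matrix argument is needed, and the quantitative bound on $\alpha$ is a direct algebraic rewriting of the bound on $\delta$ already established in Corollary~\ref{Z2ber}.
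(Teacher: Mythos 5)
Your proposal is correct and follows essentially the same route as the paper: the paper's proof simply observes that \eqref{Ckur} is \eqref{Cber} with $\bm{x}=\un_n$ and $1-\alpha=\frac{1+\delta}{2}$ (so $\delta=1-2\alpha$) and declares that ``the rest follows'' from Corollary~\ref{Z2ber} together with the landscape-to-dynamics argument already laid out in Section~\ref{section3}. You have merely made explicit the details (uniqueness of the rank-one maximizer on the high-probability event, convergence of the analytic gradient flow, and the center-stable manifold argument for generic initializations) that the paper leaves implicit.
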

    Note that the coupling matrix of the Kuramoto model \eqref{Ckur} is just the cost matrix of the $\mathbb{Z}_2$-synchronization model with Bernoulli noise \eqref{Cber} with the choice $\bm{x}=\un_n$ and $1-\alpha=\frac{1+\delta}{2}$, the rest follows.
\end{proof}

\subsection{Proofs of section~\ref{section4}}\label{proof_sec_4}
\begin{proof}[Proof of lemma~\ref{lemmeP1primal}]

Let $(L,\mu)$ be a solution of~\eqref{primal}. Since $P_{\perp}LP_{\perp} \succeq P_{\perp}$ it holds that $\lambda_2(P_{\perp}LP_{\perp}) \ge 1$, therefore $\frac{\lambda_n(P_{\perp}LP_{\perp})}{\lambda_2(P_{\perp}LP_{\perp})} \le \lambda_n(P_{\perp}LP_{\perp})$. Since $(P_{\perp}LP_{\perp},\mu)$ is feasible for~\eqref{P1}, the optimal value of~\eqref{P1} is less than that of~\eqref{primal}.

Now, let $(L,\mu)$ be feasible for~\eqref{P1} and define
\begin{equation*}
(L',\mu')=\left( \frac{L}{\lambda_2(L)}, \frac{\mu}{\lambda_2(L)} \right),
\end{equation*}
which is feasible for~\eqref{primal}. The last two constraints of~\eqref{primal} are easily verified. For the first constraint, note that $\operatorname{Ker}(L)=\un_n$; therefore, for all $x\in\Rn$, $P_{\perp}x$ is the projection of $x$ onto the orthogonal of $\operatorname{Ker}(L)$, and $\norm{Lx}^2 \geq \lambda_2(L) \norm{P_{\perp}x}^2$.
This implies that $P_{\perp}L'P_{\perp}\succeq P_{\perp}$. Thus we have

\begin{equation*}
    \mbox{Opt}\mbox{~\eqref{primal}} \le \lambda_n(L') = \frac{\lambda_n(L)}{\lambda_2(L)}.
\end{equation*}
By minimizing both sides of the inequality for all $L$ feasible for~\eqref{P1}, we get
\begin{equation*}
    \mbox{Opt}~\eqref{primal} \le \mbox{Opt}~\eqref{P1}.
\end{equation*}
\end{proof}
\begin{proof}[Proof of lemma~\ref{lemdual}]
    First, we first incorporate the constraints into the cost function and problem~\eqref{primal} becomes
\begin{align*}
  \underset{(L,\mu) \in \s^{n \times n} \times \Rn}{\mbox{inf}} \lambda_n(P_{\perp}LP_{\perp})
  & + \underset{Z \succeq 0}{\mbox{sup}} - \scal{P_{\perp}LP_{\perp}-P_{\perp}}{Z} \\
  & + \underset{W \in \R^{n \times p}}{\mbox{sup}} \scal{(P_{\perp}LP_{\perp}-\diag(\mu))V}{W} \\
  & + \underset{H \in K}{\mbox{sup}} \scal{\diag(\mu)-P_{\perp}LP_{\perp}}{H}.
\end{align*}
To lighten notations, define the constraint set $\mathcal{C}$ as :
\begin{equation*}
    \mathcal{C}=\{(W,Z,H)\in\Rnp \times \s^{n \times n} \times \s^{n \times n}:Z\succeq 0 \mbox{ and } H \in K\}.
\end{equation*}
We symmetrize and simplify the previous expression of problem~\eqref{primal}. We get that it is equal to
\begin{align*}
  \underset{(L,\mu) \in \s^{n \times n} \times \Rn}{\mbox{inf}}
  &  \lambda_n(P_{\perp}LP_{\perp}) \\
  & + \underset{(W,Z,H)\in\mathcal{C}}{\sup} - \scal{P_{\perp}\left(Z+H-\frac{WV^T+VW^T}{2}\right)P_{\perp}}{L} \\
  & \hskip 3cm + \scal{P_{\perp}}{Z} + \scal{\diag(\mu)}{H-WV^T} .
\end{align*}
By inverting the $\inf$ and the $\sup$ we get
\begin{equation}\label{maxmin}
    \begin{aligned}
      \mbox{Opt}& ~\eqref{primal} \\
                &\ge  \underset{(W,Z,H)\in\mathcal{C}}{\sup} \scal{P_{\perp}}{Z}  \\ & + \underset{L \in \s^{n \times n}}{\mbox{inf}} \lambda_n(P_{\perp}LP_{\perp}) - \scal{P_{\perp}\left(Z+H-\frac{WV^T+VW^T}{2}\right)P_{\perp}}{L} \\
    & + \underset{\mu \in \Rn}{\mbox{inf}} \scal{\diag(\mu)}{H-WV^T}.
    \end{aligned}
\end{equation}
The next step is to rewrite the last two terms of the right hand side of inequality~\eqref{maxmin} as characteristic functions of convex sets. Note that
\begin{equation*}
    \underset{\mu \in \Rn}{\mbox{inf}} \scal{\diag(\mu)}{H-WV^T} = \left\{
    \begin{array}{ll}
        0& \mbox{if } \diag(H)=\diag(WV^T) \\
        -\infty & \mbox{otherwise.}
    \end{array}
\right.
\end{equation*}
Moreover, by letting $M=P_{\perp}\left(Z+H-\frac{WV^T+VW^T}{2}\right)P_{\perp}$, we have
\begin{align*}
    \underset{L \in \s^{n \times n}}{\mbox{inf}} & \lambda_n(P_{\perp}LP_{\perp}) - \scal{M}{L} = \left\{
    \begin{array}{ll}
        0 & \mbox{if } M \succeq 0 \mbox{ and } \tr(M) \le 1, \\
        -\infty & \mbox{otherwise.}
    \end{array}
\right.
\end{align*}
To see this, assume first that $M$ is not positive semidefinite. Therefore, we can write the eigendecomposition of $M$ as $M=\sum_{i=1}^n\rho_iu_iu_i^T$ with $\rho_1 = 0$ and $u_1=\frac{\un_n}{\sqrt{n}}$ (since $\un_n$ belongs to the kernel of $M$) and $\rho_2 < 0$.
Take $L_x = xu_2u_2^T$, with $x < 0$. By noting that $P_\perp L_x P_\perp = L_x$, we have
\begin{equation*}
    \lambda_n(P_{\perp}L_xP_{\perp}) - \scal{M}{L_x} = -x\rho_2 \norm{u_2}^2 \xrightarrow[x \to - \infty ]{}-\infty.
\end{equation*}
Now, assume that $\tr(M) > 1$ and take $L_y=yI_n$ with $y>0$. We have
\begin{equation*}
    \lambda_n(P_{\perp}L_yP_{\perp}) - \scal{M}{L_y} = y(1-\tr(M)) \xrightarrow[y \to \infty ]{}-\infty.
\end{equation*}
Finally, assume that $M \succeq 0$ and $\tr(M) \le 1$. It is always true that for any symmetric matrix $L$, $P_\perp L P_\perp \preceq \lambda_n(P_\perp L P_\perp) I_n$. Therefore, since $M \succeq 0$, we have
\begin{align*}
    \scal{P_\perp L P_\perp}{M} & \le \scal{\lambda_n(P_\perp L P_\perp) I_n}{M} \\
    & = \lambda_n(P_\perp L P_\perp)\tr(M).
\end{align*}
Using the fact that $\tr(M)\leq 1$ and $\scal{P_\perp L P_\perp}{M} = \scal{L}{M}$, we get that $\lambda_n(P_\perp L P_\perp) - \scal{M}{L} \ge 0$ and the bound is reached for $L=0$. To conclude, the right hand side of inequality~\eqref{maxmin} becomes
\begin{equation*}
\begin{aligned}
\underset{(W,Z,H)\in \mathcal{C}}{\mbox{sup}} \quad & \left\langle Z, P_{\perp} \right\rangle\\
\textrm{s.t.} \quad & \diag(WV^T) = \diag(H) \\
  & M = P_{\perp}\left(Z + H -\frac{1}{2}\left(WV^T + VW^T\right)\right)P_{\perp} \\
  & M \succeq 0 \\
  & \tr(M) \le 1. \\
\end{aligned}
\end{equation*}
\end{proof}
\begin{proof}[Proof of lemma~\ref{lemb}]
    We assume $\beta\geq 0$ and Equation~\eqref{H2} holds. By developping and multiplying by $p-1$, this equation can be rewritten as
  \begin{equation}
    \beta\left(\frac{p}{\scal{P_{\perp}}{VV^T}} - (p-1)\beta\right) - \frac{\delta^2}{4}
    \geq 0. \label{H2_equiv}
  \end{equation}
  We have  
  \begin{align*}
    M_* & = P_{\perp}\left(Z_* + H_* -\frac{1}{2}\left(W_*V^T + VW_*^T\right)\right)P_{\perp} \\
        & = P_{\perp} \Bigg( \left( \frac{p}{\scal{P_\perp}{VV^T}}
          - (p-1)\beta\right)VV^T + \beta\underbrace{(VV^T)\odot (VV^T)}_{\substack{\succeq 
          (VV^T)\odot (v_1v_1^T) \\
    = \diag(v_1)VV^T\diag(v_1)^T}} \\
        & \qquad + \frac{\delta}{2}(\diag(v_1)VV^T + VV^T\diag(v_1) )  \Bigg)P_{\perp}  \\
        & \succeq P_{\perp} \Bigg( \left( \frac{p}{\scal{P_\perp}{VV^T}}
          - (p-1)\beta\right)VV^T + \beta \diag(v_1)VV^T\diag(v_1)^T \\
        & \qquad + \frac{\delta}{2}(\diag(v_1)VV^T + VV^T\diag(v_1) )  \Bigg)P_{\perp}  \\
        % & \hskip 1cm\left(\mbox{as } (VV^T) \odot (VV^T) \succeq 
        %   (VV^T)\odot (v_1v_1^T) = \diag(v_1)VV^T\diag(v_1)^T\right) \\
        & = \begin{pmatrix}V&\diag(v_1)V\end{pmatrix}
          \left(S \otimes I_p\right)
          \begin{pmatrix}V&\diag(v_1)V\end{pmatrix}^T,
  \end{align*}
  where the notation ``$\otimes$'' stands for the Kronecker product and we have defined
  \begin{equation*}
    S = \begin{pmatrix}
      \frac{p}{\scal{P_\perp}{VV^T}}-(p-1)\beta & \frac{\delta}{2} \\
      \frac{\delta}{2} & \beta \\
    \end{pmatrix}.
  \end{equation*}
  \\
  All principal minors of $S$ are nonnegative. Indeed, $\beta\geq 0$ by assumption,
  \begin{align*}
    \det(S)
    & = \beta\left(\frac{p}{\scal{P_{\perp}}{VV^T}} - (p-1)\beta\right) - \frac{\delta^2}{4} \\
    & \geq 0\mbox{ from Equation~\eqref{H2_equiv},}
  \end{align*}
  and $\frac{p}{\scal{P_{\perp}}{VV^T}} - (p-1)\beta \geq 0$ (otherwise, we must have $\beta > \frac{p}{(p-1)\scal{P_{\perp}}{VV^T}}>0$, so $\det(S)< - \frac{\delta^2}{4}$, which contradicts the nonnegativity of the determinant). Therefore, $S\succeq 0$, so $S\otimes I_p\succeq 0$ and $M_*\succeq 0$.
\end{proof}

\bibliographystyle{plainnat}
\bibliography{references}

\begin{thebibliography}{28}
\providecommand{\natexlab}[1]{#1}
\providecommand{\url}[1]{\texttt{#1}}
\expandafter\ifx\csname urlstyle\endcsname\relax
  \providecommand{\doi}[1]{doi: #1}\else
  \providecommand{\doi}{doi: \begingroup \urlstyle{rm}\Url}\fi

\bibitem[Abbe et~al.(2014)Abbe, Bandeira, Bracher, and Singer]{abbe2014decoding}
E.~Abbe, A.~S. Bandeira, A.~Bracher, and A.~Singer.
\newblock Decoding binary node labels from censored edge measurements: Phase transition and efficient recovery.
\newblock \emph{IEEE Transactions on Network Science and Engineering}, 1\penalty0 (1):\penalty0 10--22, 2014.

\bibitem[Abdalla\allowbreak et~al.(2022)Abdalla\allowbreak, Bandeira\allowbreak, Kassabov\allowbreak, Souza\allowbreak, Strogatz\allowbreak, and Townsend\allowbreak]{abdalla2022expander}
P.~Abdalla\allowbreak, A.~S. Bandeira\allowbreak, M.~Kassabov\allowbreak, V.~Souza\allowbreak, S.~H. Strogatz\allowbreak, and A.~Townsend\allowbreak.
\newblock Expander graphs are globally synchronizing.
\newblock \emph{arXiv preprint arXiv:2210.12788}, 2022.

\bibitem[Absil(2008)]{absil2008optimization}
P.-A. Absil.
\newblock \emph{Optimization algorithms on matrix manifolds}.
\newblock Princeton University Press, 2008.

\bibitem[Bandeira(2018)]{bandeira2018random}
A.~S. Bandeira.
\newblock Random {L}aplacian matrices and convex relaxations.
\newblock \emph{Foundations of Computational Mathematics}, 18:\penalty0 345--379, 2018.

\bibitem[Benson et~al.(2000)Benson, Ye, and Zhang]{benson2000solving}
S.~J. Benson, Y.~Ye, and X.~Zhang.
\newblock Solving large-scale sparse semidefinite programs for combinatorial optimization.
\newblock \emph{SIAM Journal on Optimization}, 10\penalty0 (2):\penalty0 443--461, 2000.

\bibitem[Boumal(2023)]{boumal2023introduction}
N.~Boumal.
\newblock \emph{An introduction to optimization on smooth manifolds}.
\newblock Cambridge University Press, 2023.

\bibitem[Boumal et~al.(2020)Boumal, Voroninski, and Bandeira]{boumal2020deterministic}
N.~Boumal, V.~Voroninski, and A.~S. Bandeira.
\newblock Deterministic guarantees for {B}urer-{M}onteiro factorizations of smooth semidefinite programs.
\newblock \emph{Communications on Pure and Applied Mathematics}, 73\penalty0 (3):\penalty0 581--608, 2020.

\bibitem[Burer and Monteiro(2003)]{burer2003nonlinear}
S.~Burer and R.~D.~C. Monteiro.
\newblock A nonlinear programming algorithm for solving semidefinite programs via low-rank factorization.
\newblock \emph{Mathematical programming}, 95\penalty0 (2):\penalty0 329--357, 2003.

\bibitem[Cand\`es et~al.(2013)Cand\`es, Strohmer, and Voroninski]{candes_strohmer_voroninski}
E.~J. Cand\`es, T.~Strohmer, and V.~Voroninski.
\newblock Phase{L}ift: exact and stable signal recovery from magnitude measurements via convex programming.
\newblock \emph{Communications on Pure and Applied Mathematics}, 66\penalty0 (8):\penalty0 1241--1274, 2013.

\bibitem[Goemans and Williamson(1995)]{goemans1995improved}
M.~X. Goemans and D.~P. Williamson.
\newblock Improved approximation algorithms for maximum cut and satisfiability problems using semidefinite programming.
\newblock \emph{Journal of the ACM (JACM)}, 42\penalty0 (6):\penalty0 1115--1145, 1995.

\bibitem[Jain et~al.(2025)Jain, Mizgerd, and Sawhney]{jain2025random}
V.~Jain, C.~Mizgerd, and M.~Sawhney.
\newblock The random graph process is globally synchronizing.
\newblock \emph{arXiv preprint arXiv:2501.12205}, 2025.

\bibitem[Journ\'ee et~al.(2010)Journ\'ee, Bach, Absil, and Sepulchre]{journee_bach_absil_sepulchre}
M.~Journ\'ee, F.~Bach, P.-A. Absil, and R.~Sepulchre.
\newblock Low-rank optimization on the cone of positive semidefinite matrices.
\newblock \emph{SIAM Journal on Optimization}, 20\penalty0 (5):\penalty0 2327--2351, 2010.

\bibitem[Kuramoto(1975)]{kuramoto1}
Y.~Kuramoto.
\newblock Self-entrainment of a population of coupled non-linear oscillators.
\newblock In Huzihiro Araki, editor, \emph{International Symposium on Mathematical Problems in Theoretical Physics}, pages 420--422, Berlin, Heidelberg, 1975. Springer Berlin Heidelberg.
\newblock ISBN 978-3-540-37509-8.

\bibitem[Lanckriet et~al.(2004)Lanckriet, Cristianini, Bartlett, Ghaoui, and Jordan]{lanckriet2004learning}
G.~R.~G. Lanckriet, N.~Cristianini, P.~Bartlett, L.~El Ghaoui, and M.~I. Jordan.
\newblock Learning the kernel matrix with semidefinite programming.
\newblock \emph{Journal of Machine learning research}, 5\penalty0 (Jan):\penalty0 27--72, 2004.

\bibitem[Lee et~al.(2019)Lee, Panageas, Piliouras, Simchowitz, Jordan, and Recht]{lee_panageas_piliouras_simchowitz_jordan_recht}
J.~D. Lee, I.~Panageas, G.~Piliouras, M.~Simchowitz, M.~I. Jordan, and B.~Recht.
\newblock First-order methods almost always avoid strict saddle points.
\newblock \emph{Mathematical programming}, 176:\penalty0 311--337, 2019.

\bibitem[Ling(2025)]{ling2023local}
S.~Ling.
\newblock Local geometry determines global landscape in low-rank factorization for synchronization.
\newblock \emph{To appear in Foundations of Computational Mathematics}, 2025.

\bibitem[Ling et~al.(2019)Ling, Xu, and Bandeira]{ling_xu_bandeira}
S.~Ling, R.~Xu, and A.~S. Bandeira.
\newblock On the landscape of synchronization networks: A perspective from nonconvex optimization.
\newblock \emph{SIAM Journal on Optimization}, 29\penalty0 (3):\penalty0 1879--1907, 2019.

\bibitem[McRae(2025)]{mcrae_normalized}
A.~D. McRae.
\newblock Benign landscapes for synchronization on spheres via normalized {L}aplacian matrices.
\newblock \emph{arXiv preprint arXiv:2503.18801}, 2025.

\bibitem[McRae and Boumal(2024)]{mcrae2024benign}
A.~D. McRae and N.~Boumal.
\newblock Benign landscapes of low-dimensional relaxations for orthogonal synchronization on general graphs.
\newblock \emph{SIAM Journal on Optimization}, 34\penalty0 (2):\penalty0 1427--1454, 2024.

\bibitem[McRae et~al.(2024)McRae, Abdalla, Bandeira, and Boumal]{mcrae2024nonconvex}
A.~D. McRae, P.~Abdalla, A.~S. Bandeira, and N.~Boumal.
\newblock Nonconvex landscapes for $\mathbb{Z}_2$ synchronization and graph clustering are benign near exact recovery thresholds.
\newblock \emph{arXiv preprint arXiv:2407.13407}, 2024.

\bibitem[Mei et~al.(2017)Mei, Misiakiewicz, Montanari, and Oliveira]{Mei2017SolvingSF}
S.~Mei, T.~Misiakiewicz, A.~Montanari, and R.~I. Oliveira.
\newblock Solving {SDP}s for synchronization and {M}ax{C}ut problems via the {G}rothendieck inequality.
\newblock In \emph{Annual Conference Computational Learning Theory}, 2017.
\newblock URL \url{https://api.semanticscholar.org/CorpusID:6039962}.

\bibitem[O'Carroll et~al.(2022)O'Carroll, Srinivas, and Vijayaraghavan]{Liam}
L.~O'Carroll, V.~Srinivas, and A.~Vijayaraghavan.
\newblock The {B}urer-{M}onteiro {SDP} method can fail even above the {B}arvinok-{P}ataki bound.
\newblock In S.~Koyejo, S.~Mohamed, A.~Agarwal, D.~Belgrave, K.~Cho, and A.~Oh, editors, \emph{Advances in Neural Information Processing Systems 35 - 36th Conference on Neural Information Processing Systems, NeurIPS 2022}, Advances in Neural Information Processing Systems. Neural information processing systems foundation, 2022.

\bibitem[O’donoghue et~al.(2016)O’donoghue, Chu, Parikh, and Boyd]{o2016conic}
B.~O’donoghue, E.~Chu, N.~Parikh, and S.~Boyd.
\newblock Conic optimization via operator splitting and homogeneous self-dual embedding.
\newblock \emph{Journal of Optimization Theory and Applications}, 169:\penalty0 1042--1068, 2016.

\bibitem[Pataki(1998)]{pataki}
G.~Pataki.
\newblock On the rank of extreme matrices in semidefinite programs and the multiplicity of optimal eigenvalues.
\newblock \emph{Mathematics of Operations Research}, 23\penalty0 (2):\penalty0 339--358, 1998.
\newblock ISSN 0364765X, 15265471.
\newblock URL \url{http://www.jstor.org/stable/3690515}.

\bibitem[Vershynin(2018)]{vershynin2018high}
R.~Vershynin.
\newblock \emph{High-dimensional probability: An introduction with applications in data science}, volume~47.
\newblock Cambridge university press, 2018.

\bibitem[Waldspurger and Waters(2020)]{WW20}
I.~Waldspurger and A.~Waters.
\newblock Rank optimality for the {B}urer-{M}onteiro factorization.
\newblock \emph{{SIAM} J. Optim.}, 30\penalty0 (3):\penalty0 2577--2602, 2020.
\newblock \doi{10.1137/19M1255318}.
\newblock URL \url{https://doi.org/10.1137/19M1255318}.

\bibitem[Waldspurger et~al.(2015)Waldspurger, d’Aspremont, and Mallat]{waldspurger2015phase}
I.~Waldspurger, A.~d’Aspremont, and S.~Mallat.
\newblock Phase recovery, maxcut and complex semidefinite programming.
\newblock \emph{Mathematical Programming}, 149:\penalty0 47--81, 2015.

\bibitem[Zhang(2024)]{zhang2024improved}
R.~Y. Zhang.
\newblock Improved global guarantees for the nonconvex {B}urer--{M}onteiro factorization via rank overparameterization.
\newblock \emph{Mathematical Programming}, pages 1--30, 2024.

\end{thebibliography}

\end{document}